\documentclass{amsart}
\usepackage[mathscr]{eucal}
\usepackage{graphicx}
\usepackage{amscd}	
\usepackage{amsmath}
\usepackage{amsthm}
\usepackage{amsxtra}
\usepackage{mathtools}
\usepackage{calc} 
\usepackage{amsfonts}
\usepackage{amssymb}
\usepackage{latexsym}
\usepackage{pdfsync}

\usepackage{tikz-cd}
\usepackage[all]{xy}
\usepackage[colorlinks, bookmarks=true]{hyperref}

\newtheorem{theorem}{Theorem}[subsection]
\theoremstyle{plain}

\newtheorem{corollary}[theorem]{Corollary}
\newtheorem{corollary-definition}[theorem]{Corollary-Definition}

\newtheorem{definition}[theorem]{Definition}

\newtheorem{lemma}[theorem]{Lemma}

\newtheorem{proposition}[theorem]{Proposition}

\numberwithin{equation}{section}

\newcommand{\blank}{\hspace{0.04cm} \rule{2.4mm}{.4pt} \hspace{0.04cm} }

\DeclareMathOperator{\ot}{\overset{\rightharpoondown}{\otimes}}

\theoremstyle{definition}
\newtheorem{example}[theorem]{Example}
\newtheorem{remark}[theorem]{Remark}

\newcommand{\Ker}{\mathrm{Ker}\,}
\newcommand{\Coker}{\mathrm{Coker}\,}

\newcommand{\Hom}{\mathrm{Hom}\,}
\DeclareMathOperator{\Ext}{\mathrm{Ext}}
\newcommand{\Tor}{\mathrm{Tor}}

\newcommand{\Z}{\mathbb{Z}\,}
\newcommand{\Q}{\mathbb{Q}\,}

\newcommand{\lra}{\longrightarrow}

\newcommand{\injt}{\mathfrak{s}}
\newcommand{\injc}{\mathfrak{q}}

\newcommand{\T}{\overset{\rightharpoondown}{\mathrm{T}}}
\newcommand{\V}{\mathrm{V}}
\newcommand{\M}{\mathrm{M}}

\renewcommand{\O}{\Omega}
\newcommand{\s}{\Sigma}
\newcommand{\te}{\infty}
\newcommand{\dotr}{\mbox{$\boldsymbol{\cdot}$}}

\mathchardef\mhyphen="2D

% Greek letters

\newcommand{\ga}{\alpha}
\newcommand{\gb}{\beta}

\renewcommand{\ge}{\varepsilon}

\newcommand{\gk}{\kappa}
\newcommand{\gl}{\lambda}

\newcommand{\gt}{\tau}

%  Categories

%\newcommand{\fp}{\mathrm{fp}}

\newcommand{\ab}{\mathrm{Ab}\,}

\newcommand{\modL}{\mathrm{mod}\mhyphen\Lambda}
\newcommand{\LMod}{\Lambda\mhyphen\mathrm{Mod}}

\newcommand{\fp}{{fp}(\LMod, \ab)}

% Diagrams
\newcommand{\dia}[1]{\[\xymatrix{#1 }\]}
\usepackage{pgfpages}
\usepackage{tikz}
\usetikzlibrary{matrix,arrows,snakes,backgrounds}

\newcounter{hours}
\newcounter{minutes}

\begin{document}

\title[Injective stabilization of additive functors, III]{Injective stabilization of additive functors, III. Asymptotic stabilization of the tensor product}

\author{Alex Martsinkovsky}             
\address{Mathematics Department\\
Northeastern University\\
Boston, MA 02115, USA}
\email{a.martsinkovsky@northeastern.edu}
%\urladdr{}
\author{Jeremy Russell}
\address{Department of Mathematics\\
Rowan University\\
201 Mullica Hill Rd, Glassboro, NJ 08028}
\email{russelljj@rowan.edu}
%\urladdr{}
\thanks{The first author is supported in part by the Shota Rustaveli National Science Foundation of Georgia Grant NFR-18-10849.}
\date{\today, \setcounter{hours}{\time/60} \setcounter{minutes}{\time-\value{hours}*60} \thehours\,h\ \theminutes\,min}
\subjclass[2010]{Primary: 16 E30 ; Secondary: }
\keywords{additive functor, injective stabilization of the tensor product, asymptotic stabilization of the tensor product, Tate cohomology, Tate homology, Vogel cohomology, Vogel homology, Buchweitz cohomology, connected sequence of functors, Mislin's P-completion, Triulzi's J-completion, connecting homomorphism, comparison homomorphism, Steenrod-Sitnikov homology, asymptotic torsion and cotorsion.}
%\dedicatory{}

\begin{abstract}
The injective stabilization of the tensor product is subjected to an iterative procedure that utilizes its bifunctor property. The limit of this procedure, called the asymptotic stabilization of the tensor product, provides a homological counterpart of Buchweitz's asymptotic construction of stable cohomology. The resulting connected sequence of functors is isomorphic to Triulzi's $J$-completion of the Tor functor. A comparison map from Vogel homology to the asymptotic stabilization of the tensor product is constructed and shown to be  always epic. The category of finitely presented functors is shown to be complete and cocomplete. As a consequence, the inert injective stabilization of the tensor product with fixed variable a finitely generated module over an artin algebra is shown to be finitely presented. A description of its defect and all right-derived functors is given. New notions of asymptotic torsion and cotorsion are introduced and are related to each other. A connection with Buchweitz cohomology based on injectives is established.
\end{abstract}

\maketitle
\tableofcontents

\section{Introduction}

This is the third in a series of papers on applications of what should be called ``homological algebra in degree zero'', this time to stable homological algebra. The specific application dealt with in this part is a new generalization of Tate homology (as opposed to Tate cohomology) to arbitrary modules over arbitrary rings. As an unexpected byproduct of this construct, we introduce new concepts of asymptotic torsion and asymptotic cotorsion, thus establishing a connection with the torsion and cotorsion introduced in the previous paper \cite{MR-2} of this series. 

Our interest in torsion stems from its importance for the problem of recognizing syzygy modules, as was expounded by M.~Auslander in~\cite{A67}. The significance of this problem transcends the boundary of algebra. In mathematical systems theory it is related to the controllability of linear systems. In topology, a similar problem is referred to as "delooping". Of special interest are infinite syzygy modules. Their topological counterparts are known as infinite loop spaces~\cite{Ad78}. Over Gorenstein local rings these are precisely maximal Cohen-Macaulay modules. In an obvious sense these are "asymptotic" objects and they are closely related to Tate cohomology as was shown 
in~\cite{Bu}.

Even more surprisingly, the new results (and their proofs) show the advantages of focusing on the category of finitely presented (aka coherent) functors, due to its completeness and cocompleteness. This indicates the emergence of a "coherent homological algebra",  which is based on redefining classical homological constructions that involve colimits. Further applications of the new techniques will be given in a subsequent paper.

The original motivation for this papers (in fact, for the entire series) came from an obvious misbalance between generalized Tate cohomology and generalized Tate homology: the former admits (at least) three constructions whereas the latter -- only two. More precisely, the available homological constructions are obvious analogs of their cohomological counterparts, leaving Buchweitz's  generalization of Tate cohomology  without a homological analog (see the next section for more details). 

The main technical tool used in this paper is the notion of the injective stabilization of an additive functor from modules to abelian groups. This concept goes back to  foundational works of M. Auslander in the 1960s (\cite{A66} and \cite{AB}). Recall that the injective stabilization of an additive functor $F$ is defined as
%is the largest subfunctor vanishing on the injectives. Equivalently, this is the largest effaceable subfunctor. The original definition defines the injective stabilization as 
the kernel of the natural transformation from $F$ to its zeroth right-derived functor. It is usually denoted by~$\overline{F}$. For a given module $B$,  $\overline{F}(B)$ can be easily computed: if $\iota : B \to I$ is monic with $I$ injective, then $\overline{F}(B) \simeq \Ker F(\iota)$. (See~\cite{MR-1} for a detailed treatment of this construct.) In particular, one can take $F : = A \otimes \blank$. Unlike the tensor product itself, its injective stabilization is not balanced. For this reason, the overline is replaced by a harpoon and $\overline{F}(B)$ becomes $A \ot B$, with the harpoon pointing to the active variable, i.e., the variable being embedded in an injective. The resulting expression is in fact a bifunctor. Let $\Omega$ denote the syzygy operation in a projective resolution and $\Sigma$ denote the cosyzygy operation in an injective resolution. While neither $\Omega$ nor $\Sigma$ is well-defined (their values on a module depend on the chosen resolutions), $\Omega^{i}A \ot \Sigma^{i}B$ is well-defined and is again a bifunctor. Moreover, there is a sequence of canonically defined natural transformations 
\[
\xymatrix
	{
	\ldots \ar[r] 
	& \Omega^2 A\ \ot\ \Sigma^2 B  \ar[r]^{\Delta_2} 
	& \Omega A\ \ot\ \Sigma B \ar[r]^{\Delta_1} 
	& A\ \ot\ B,
	}
\]
and the limit $\T_{0}(A,B)$ of this sequence, called here the \texttt{asymptotic stabilization of the tensor product}, yields the desired generalization of Tate homology. The above construct works for arbitrary modules over arbitrary rings. 

Now we give a brief outline of the paper. Section~\ref{S:stable cohomology} describes various generalizations of Tate cohomology. Section~\ref{S:stable homology} does the same for Tate homology. Section~\ref{S:lemmas} deals with three lemmas, all related to connecting homomorphisms in planar or spatial diagrams. In Section~\ref{S:first-const} we give the first construction of stable homology, as described above via the asymptotic stabilization of the tensor product. In the same section we show that the result is a connected sequence of functors with an explicitly constructed connecting homomorphism. Another construction of stable homology, based on the sequence $\Tor_{1}(\Omega^{i}A, \Sigma^{i+1}B)$, is given in Section~\ref{S:sec-const}. The result is again a connected sequence of functors, isomorphic to $\T_{0}(\blank, \blank)$. Its connecting homomorphism comes from a doubly infinite exact sequence obtained by splicing the familiar long sequence of the Tor functors in positive degrees and a long sequence of iterated injectives stabilizations of the tensor product. Yet another construction of stable homology is given in Section~\ref{S:third-const}. This time it is based on the sequence $S^{1}\Tor_{1}(\Omega^{i}A, \blank)$, where~$S^{1}$ denotes the right satellite. The resulting connected sequence of functors is also isomorphic to $\T_{0}(A, \blank)$ and has the additional advantage of being obviously isomorphic to the so-called $J$-completion of Tor, introduced and studied by M.~Triulzi~\cite{Tr}.
In Section~\ref{S:comparison}, we construct a comparison transformation from Vogel homology to $\T_{0}(\blank, \blank)$ and show that it is always epic. This evokes a similar situation in topology where one has an epic natural transformation from Steenrod-Sitnikov homology to \v{C}ech homology. Based on that analogy, the first author conjectured that the kernel of the comparison transformation from  Vogel homology to the asymptotic stabilization of the tensor product should be given by a suitable first derived limit. A recent result of I.~Emmanouil and P.~Manousaki~\cite{EM} shows that this is indeed the case. In Section~\ref{S:(co)comp} we show that the category of finitely presented functors is (co)complete. This sets the stage for 
Section~\ref{S:coherence}, where we establish the coherence of the inert asymptotic stabilization when the fixed argument satisfies certain finiteness conditions. All finitely generated modules over an artin algebra satisfy this condition. In the same section we compute the defect of the inert asymptotic stabilization, which allows us to determine all of its right-derived functors. These results are then applied to the torsion functor $\injt$ introduced in~\cite{MR-2}. This leads naturally to the introduction of an asymptotic torsion and asymptotic cotorsion, and we show that the defect of the 
asymptotic torsion is isomorphic to the asymptotic cotorsion of the ring viewed as a module on the opposite side. 

The surprising connection with topology mentioned above hints at further developments. Treating Buchweitz's cohomological construction as an analog of stable homotopy groups (both are byproducts of universal inversion of a functor) one is forced to look for a homotopy-theoretic counterpart of the asymptotic stabilization. The proper context for doing this is an axiomatic homotopy theory, or, even more broadly, quite general categories with suitable cylinders. The relevant results will appear elsewhere.
 
We follow the terminology and notation established in~\cite{MR-1}; the reader may benefit from reviewing that source. Some results contained in the present paper overlap with some results obtained by the second author in his PhD thesis~\cite{R-Thesis}.

\section{Stable cohomology}\label{S:stable cohomology} 
Tate cohomology was invented around 1950. We assume that the reader is familiar with this notion, but if they need to refresh their memory, we recommend~\cite{Br} and~\cite{CE}. In 1977, F.~T.~Farrell \cite{F} constructed a cohomology theory for groups of finite virtual cohomological dimension that, for finite groups, gave the same result as Tate cohomology. In the mid-1980s, R.-O.~Buchweitz \cite{Bu} constructed a generalization of Tate (and Farrell) cohomology that worked over arbitrary Gorenstein commutative rings.

\subsection{Vogel cohomology} 

At about the same time, Pierre Vogel \cite{G} came up with his own generalization of Tate cohomology, and while he was interested in arbitrary group rings, his approach actually worked over any ring. We now review that construction.

Let $\Lambda$ be a (unital) ring and $M$ and $N$ (left) $\Lambda$-modules. Choose projective resolutions $(\mathbf{P}, \partial) \longrightarrow M$ and $(\mathbf{Q}, \partial) \longrightarrow N$. Forgetting the differentials, we have $\Z$-diagrams $\mathbf{P}$ and $\mathbf{Q}$ of
left $\Lambda$-modules, together with a $\Z$-diagram $(\mathbf{P},\mathbf{Q})$ of abelian groups. The latter has $\prod_i \Hom(P_{i},Q_{i+n})$ as its degree $n$ component. It contains the subdiagram $(\mathbf{P},\mathbf{Q})_b$ of bounded maps,
whose degree $n$ component is $\coprod_i \Hom(P_{i},Q_{i+n})$. Passing to the quotient, we have a short exact sequence of diagrams
\[
0 \longrightarrow (\mathbf{P},\mathbf{Q})_b \longrightarrow (\mathbf{P},\mathbf{Q}) \longrightarrow (\widehat{\mathbf{P},\mathbf{Q}}) \longrightarrow 0
\]
The standard definition, $D(f) := \partial \circ f - (-1)^{\deg f} f \circ \partial$, yields  a differential on the middle diagram, which clearly restricts to a differential on the subdiagram of bounded maps. Thus the inclusion map is actually an inclusion of complexes, and the corresponding quotient becomes the quotient complex. By construction, the maps in this short exact sequence are chain maps between the constructed complexes. The $n$th Vogel cohomology group of $M$ with coefficients in $N$, where $n \in \Z$, is then defined as the $n$th cohomology 
group of the complex $(\widehat{\mathbf{P},\mathbf{Q}})$. We denote it by 
$\mathrm{V}^{n}(M,N)$.

\subsection{Buchweitz cohomology}

 As we mentioned before, Buchweitz was interested in a generalized Tate 
cohomology over Gorenstein rings, but his construction (actually, one of two proposed) turned out to work for any ring. We now describe his approach. Again, let $\Lambda$ be an arbitrary (unital) ring, $M$ and $N$ (left) $\Lambda$-modules, and $\Lambda$-Mod the category of left $\Lambda$-modules and homomorphisms. First, we pass to the category  $\Lambda$-\underline{Mod} of modules modulo projectives, which has the same objects as $\Lambda$-Mod, but whose morphisms $(\underline{M,N})$ are defined as the quotient groups $(M,N)/P(M,N)$, where $P(M,N)$ is the subgroup of all maps that can be factored though a projective module. The composition of classes of homomorphisms is defined as the class of the composition of representatives. One of the advantages of this new category is that the syzygy operation $\Omega$ on $\Lambda$-Mod becomes an additive endofunctor on $\Lambda$-\underline{Mod}. In particular, for $M$ and $N$ we have a sequence of homomorphisms of abelian groups
\[
(\underline{M,N}) \longrightarrow (\underline{\Omega M,\Omega N}) \longrightarrow 
(\underline{\Omega^2 M, \Omega^2 N}) \longrightarrow \ldots
\] 
The $n$th Buchweitz cohomology group $\mathrm{B}^{n}(M,N)$, $n \in \Z$ is defined as 
\[
\underset{n+k, k \geq 0}{\underrightarrow{\lim}}\,(\underline{\Omega^{n+k} M, \Omega^{k} N}).
\]

\subsection{Mislin's construction}

Yet another generalization of Tate cohomology was given by G.~Mislin \cite{M} in 1994. It is a special case of a considerably more general construct. For a cohomological (or, more generally, connected) sequence of functors $\{F^i\}$, $i \in \Z$ Mislin uses a sequence of natural transformations
\[
F^i \longrightarrow S_{1}(F^{i+1}) \longrightarrow S_{2}(F^{i+2}) \longrightarrow \ldots,
\]
where $S_{j}$ denotes the $j$th left satellite, and defines what he calls the $P$-completion of $\{F^i\}$ as 
\[
\underset{k \geq 0}{\varinjlim}\, S_{k}(F^{i+k}) =: \mathrm{M}^{i}F.
\]
Evaluating the colimit on the group cohomology (viewed as a cohomological functor of the coefficients), he gets a new cohomological (or connected if the original sequence is connected but not necessarily cohomological) sequence of functors. He then proves that, for groups of finite virtual cohomological dimension, the new cohomology is isomorphic to Farrell cohomology. Moreover, he also establishes, for arbitrary groups, an isomorphism between his construction and Buchweitz's cohomology (called in the paper the Benson-Carlson cohomology, after the two authors, who independently found Buchweitz's cohomology in 1992). We remark that Mislin's construction is completely general and applies, in particular, to the $\Ext$ functors over any ring.
  
\section{Stable homology}\label{S:stable homology}

At this point, one may ask if there are \texttt{homological} analogs of the various cohomology theories discussed above. The answer to this question is less clear. First, there was no ``Tate homology'' in Tate's original work: only the Hom functor was used with complete resolutions. However, at the same time when P.~Vogel constructed his cohomology, he also constructed a homology theory. We begin by reviewing his construction.

\subsection{Vogel homology}

Let $\Lambda$ be a ring, $M$ a left $\Lambda$-module and $N$ a right $\Lambda$-module. Choose a projective resolution $(\mathbf{P}, \partial) \longrightarrow M$ and an injective resolution $ N \longrightarrow (\mathbf{I}, \partial)$. Forgetting the differentials, we have $\Z$-diagrams $\mathbf{P}$ and $\mathbf{I}$ of left and, respectively, right 
$\Lambda$-modules, together with a $\Z$-diagram $\mathbf{P} \widehat{\otimes} \mathbf{I}$ of abelian groups. The latter has $\prod_i (P_{i} \otimes I^{i - n})$ as its degree $n$ component. It contains the subdiagram $\mathbf{P} \otimes \mathbf{I}$, whose degree $n$ component is $\coprod_i (P_{i} \otimes I^{i - n})$. Passing to the quotient, we have a short exact sequence of diagrams
\[
0 \longrightarrow \mathbf{P} \otimes \mathbf{I} \longrightarrow \mathbf{P} \widehat{\otimes} \mathbf{I} \longrightarrow \mathbf{P} \overset{\vee}{\otimes} \mathbf{I}
\longrightarrow 0
\]
The standard definition  

\begin{equation}\label{vogel-differential}
 \begin{aligned}
 D(a \otimes b) 
 & := \partial_{P}(a) \otimes b + (-1)^{\deg a} a \otimes \partial_{I}(b)\\
 & = \big(\partial_{P} \otimes 1 + (-1)^{\deg_{1}(\blank)}1\otimes \partial_{I} \big)
 (a \otimes b),
\end{aligned}
\end{equation}
where $a$ and $b$ are homogeneous elements of $\mathbf{P}$ and, respectively, $\mathbf{I}$, and $\deg_{1}(\blank)$ picks the degree of the first factor of a decomposable tensor, gives rise to a differential on $\mathbf{P} \otimes \mathbf{I}$. It is easy to check that it extends to a differential, denoted 
by~$D$ again, on $\mathbf{P} \widehat{\otimes} \mathbf{I}$. Indeed, if 
$s \in (\mathbf{P} \widehat{\otimes} \mathbf{I})_{n}$ is a degree~$n$ element, then $s = (s_{i})_{i \in \Z}$, where each $s_{i} \in P_i \otimes I^{i - n}$ is just a finite sum of decomposable tensors. For  each $k \in \Z$, define
\[
D : \prod_i (P_{i} \otimes I^{i - n}) \longrightarrow (P_{k} \otimes I^{k +1 - n}):
s \mapsto (\partial \otimes 1)(s_{k+1}) + (-1)^k (1 \otimes \partial)(s_{k})
\]
Now, we obtain the desired differential by the universal property of direct product.

As a consequence, the third term in the short exact sequence above becomes a complex, and Vogel homology is now defined by setting
\begin{equation}\label{vogel-homology}
 \mathrm{V}_n(M,N) := \mathrm{H}_{n+1}(\mathbf{P} \overset{\vee}{\otimes} \mathbf{I}).
\end{equation}

%\begin{remark}
%Because of the shift in the subscript, the connecting homomorphism in the long homology exact sequence is a map $\mathrm{V}_n(M,N) \longrightarrow \Tor_n(M,N)$.
%\end{remark}
%\begin{remark}
%The choice of the projective and injective resolutions can be flipped. By choosing an injective resolution of $M$ and a projective resolution of~$N$, one obtains another homological functor, which in general is different from the original one. This can be seen by choosing $M$ to be projective. In that case, the original functor evaluates to zero, whereas the alternative construction produces, in general, a nonzero result.
%\end{remark}

\subsection{The $J$-completion}\label{S:J-completion}

A homological analog of Mislin's cohomological $P$-completion, called the $J$-completion, was defined by M.~Triulzi in his PhD thesis~\cite{Tr}.
%\footnote{The authors are grateful to Lucho Avramov for bringing this work to our attention and to Lars Christensen for sending us a copy of it.}  
Like its cohomological prototype, it is defined on connected sequences of functors, but even if the original sequence is cohomological, the result doesn't seem to be cohomological\footnote{This is related to the fact that the inverse limit is not an exact functor.}; one can only claim that the resulting sequence is connected. For reference, we denote it by $\mathrm{M}{_i}F$.

%In 2001, Y. Yoshino \cite{Y} attempted to construct Vogel-like homology and cohomology theories of univariate half-exact functors. Like Mislin, he used the calculus of satellites, but his definition produced nonzero results only in nonnegative degrees. Moreover, while a claim has been made that a ``Tate-Vogel homology theory'' was constructed, it was not checked if the resulting homology was indeed a cohomological sequence of functors. While the latter claim still remains unproven (and may possibly be not true), below we shall show how to construct an analog of Mislin's procedure, which would produce nonzero results in all degrees. For reference, we denote it by $\mathrm{M}{_i}F$.

%\subsection {Summary}
We summarize the existing constructions in the following table:
\smallskip 

\begin{center}
\begin{tabular}{|c|c|}
\hline 
\textbf{Cohomology} & \textbf{Homology} \\
\hline \hline
$\mathrm{V}^{i}(M,N)$ & $\mathrm{V}_{i}(M,N)$ \\ 
\hline
$\mathrm{B}^{i}(M,N)$ & ?  \\ 
\hline
$\mathrm{M}^{i}F$ & $\mathrm{M}_{i}F$\\
\hline
\end{tabular}
\end{center}
\bigskip
One of the goals of this paper is to replace the question mark by a homological analog of Buchweitz's construction.

\section{Three lemmas on connecting homomorphisms}\label{S:lemmas}

In this section, we gather general observations about connecting homomorphisms.

\textbf{Blanket assumption.} Throughout this paper, whenever we deal with the connecting homomorphism in the snake lemma, we automatically assume that the homomorphism was constructed by pushing and pulling the elements along a staircase path, as in the traditional proof of the lemma.
\smallskip

\subsection{Planar diagrams}

Let
\[
0 \longrightarrow B {\longrightarrow} C {\longrightarrow} D \longrightarrow 0
\quad \text{and}  \quad 0 \longrightarrow F {\longrightarrow} P {\longrightarrow} A \longrightarrow 0
\]
be short exact sequences with $P$ projective. Tensoring them together, we have a commutative diagram with exact rows and columns
 
\begin{equation}\label{D:Tor-iso}
\begin{gathered}
 \xymatrix
	{
	&  
	& 
	& 0\ar[d]
	&
\\
	&
	& 
	& \Tor_1(A, D) \ar[d]
\\
	& \Omega A\otimes B \ar[d] \ar[r]
	& \Omega A\otimes C \ar[d] \ar[r]
	& F \otimes D \ar[d] \ar[r] 
	& 0
\\
	0\ar[r]
	& P \otimes B \ar[r] \ar[d] 
	& P \otimes C \ar[r] \ar[d] 
	& P \otimes D \ar[d] \ar[r]
	& 0
\\	
	\Tor_1(A, D) \ar[r]^{\alpha}
	& A \otimes B \ar[d] \ar[r]^{\beta}
	& A \otimes C \ar[r] \ar[d]
	& A \otimes D \ar[d] \ar[r]
	& 0
\\
	& 0 
	& 0
	& 0
	}
\end{gathered}
\end{equation}
where the bottom row and the rightmost column are fragments of the corresponding long homology exact sequences. In this diagram, we have two copies of $\Tor_{1}(A, D)$, one at the top of the rightmost vertical exact sequence, the other -- as a term in the long exact sequence in the bottom row. Straight from the definition of the injective stabilization, one can see that both map into $A \ot B$ by way of the corresponding connecting homomorphisms, and we wish to make a commutative triangle by constructing an isomorphism between the two copies of Tor. 

\begin{lemma}\label{L:gamma}
 Let $\delta : \Tor_1(A, D) \to A \otimes B$ be the connecting homomorphism given by the snake lemma in the above diagram. Then there is an isomorphism 
 \[
 \gamma : \Tor_1(A, D) \longrightarrow \Tor_1(A, D)
 \]
 such that $\alpha \gamma = \delta$.
\end{lemma}

\begin{proof}
 If $C$ is projective, then we are immediately done by the snake lemma. Moreover, the construction of the isomorphism is explicit -- it is given by the connecting homomorphism. In general, choose an epimorphism $Q \longrightarrow D$ with $Q$ projective and lift the identity map on $D$ to obtain a commutative diagram with exact rows
 \[
\xymatrix
	{
	0 \ar[r]
	& \Omega D \ar[r] \ar[d]
	& Q \ar[r] \ar[d] 
	& D \ar[r] \ar@{=}[d]
	& 0
\\
	0 \ar[r]
	& B \ar[r] 
	& C \ar[r]  
	& D \ar[r] 
	& 0
	}
\] 
Tensoring it with the short exact sequence 
$0 \longrightarrow F \overset{\alpha}{\longrightarrow} P \overset{\beta}{\longrightarrow} A \longrightarrow 0$, we have a spatial commutative diagram with bottom face
\[
\xymatrix
	{
	0 \ar[r]
	& \Tor_1(A, D) \ar[r] \ar@{=}[d]
	& A \otimes \Omega D \ar[r] \ar[d]
	& A \otimes Q \ar[r] \ar[d] 
	& A \otimes D \ar[r] \ar@{=}[d]
	& 0
\\
	\Tor_1(A, C) \ar[r]
	& \Tor_1(A, D) \ar[r]
	& A \otimes B \ar[r] 
	& A \otimes C \ar[r]  
	& A \otimes D \ar[r] 
	& 0
	}
\] 
Its front face is the diagram~\eqref{D:Tor-iso}, and as we just observed, the lemma is true for the back face. The desired result now follows from the naturality of the connecting homomorphism and a trivial diagram chase.
\end{proof}

Next, we shall establish two results, stated and proved in greater generality than is needed for this paper, showing how to compose connecting homomorphisms running in spatial diagrams. For an exact $3 \times 3$ square and a connected sequence of covariant functors, the two possible compositions of the connecting homomorphisms, as in~(\cite[Proposition 4.1]{CE}), always anticommute. In the setup we are about to describe, there are no connected sequences of functors; instead, we postulate more general properties of the requisite spatial diagrams. The result, however, remains true -- the two possible compositions of the connecting homomorphisms anticommute. The proof, while somewhat tedious, is done by diagram chase and is thus elementary.

We continue to assume that a connecting homomorphism in the snake lemma is constructed by pushing and pulling the elements along a staircase path, as in the traditional proof of the lemma.

\subsection{Spatial diagrams: front, bottom, and right-hand faces}

\begin{lemma}\label{L:down-horizontal}
Let
\[
\xymatrix@R25pt@C35pt
	{
	&
	& L_{1}' \ar[rr] \ar[d] \ar[lddd]
	&
	& M_{1}' \ar@{->>}[rr] \ar[d] \ar[lddd]
	&
	& N_{1}' \ar[d] \ar[lddd]
\\
	&
	& L' \ar[rr] |! {[urr];[ddr]}\hole \ar@{->>}[d] \ar[lddd] |! {[ddl];[ddr]}\hole
	&
	& M' \ar@{->>}[rr]|*+[o][F-]{17} |!{[urr];[ddr]}\hole \ar@{->>}[d] 
	\ar[lddd]|*+[o][F-]{16} |! {[ddl];[ddr]}\hole
	&
	& N' \ar@[blue]@{->>}[d]|*+[o][F-]{18} \ar@[blue]@{>->}[lddd]|*+[o][F-]{19}
\\
	&
	& L_{2}' \ar[rr] |! {[uurr];[dr]}\hole |!{[urr];[ddr]}\hole 
	\ar[lddd] |! {[dl];[dr]} \hole |! {[ddl];[ddr]} \hole  
	&
	& M_{2}' \ar@[magenta]@{->>}[rr]|*+[o][F-]{20} |! {[uurr];[dr]}\hole |!{[urr];		[ddr]} \hole 
	\ar@[magenta]@{>->}[lddd] |! {[dl];[dr]} \hole |*+[o][F-]{21} |! {[ddl];[ddr]} 		\hole
	&
	& *+[F]{N_{2}'} \ar[lddd]
\\
	& L_{1} \ar[rr]  \ar[d] \ar@{->>}[lddd]
	&
	& M_{1} \ar@{->>}[rr]|*+[o][F-]{4} \ar[d]|*+[o][F-]{5} \ar@{->>}[lddd]
	&
	& N_{1} \ar@[blue][d]|*+[o][F-]{8} \ar@[blue]@{->>}[lddd]|*+[o][F-]{3}
	&
\\
	& L \ar[rr] |*+[o][F-]{14} |! {[urr];[ddr]}\hole \ar@{->>}[d] 
	\ar@{->>}[lddd]|*+[o][F-]{12} |! {[ddl];[ddr]}\hole
	&
	& M \ar@{->>}[rr]|*+[o][F-]{7} |! {[urr];[ddr]}\hole \ar@{->>}[d] |*+[o][F-]{15}
	\ar@{->>}[lddd] |*+[o][F-]{6} |! {[ddl];[ddr]}\hole
	&
	& N \ar@{->>}[d] \ar@{->>}[lddd]
	&
\\
	& L_{2} \ar@[magenta][rr]|*+[o][F-]{13} |! {[uurr];[dr]}\hole |! {[urr];[ddr]}\hole 
	\ar@[magenta]@{->>}[lddd] |! {[dl];[dr]} \hole |*+[o][F-]{11} |! {[ddl];[ddr]}\hole
	&
	& M_{2} \ar@{->>}[rr] |! {[uurr];[dr]}\hole  |! {[urr];[ddr]}\hole 
	\ar@{->>}[lddd] |! {[dl];[dr]} \hole |! {[ddl];[ddr]}\hole
	&
	& N_{2} \ar@{->>}[lddd]
	&
\\
	L_{1}'' \ar[rr] \ar[d]
	&
	& M_{1}'' \ar@[red]@{->>}[rr]|*+[o][F-]{1} \ar@[red][d]|*+[o][F-]{2}
	&
	& *+[F]{N_{1}''} \ar[d]|*+[o][F-]{\alpha}
	&
	&
\\
	L'' \ar@[red]@{>->}[rr] |*+[o][F-]{9} \ar@[red]@{->>}[d]|*+[o][F-]{10}
	&
	& M'' \ar@{->>}[rr] \ar@{->>}[d]
	&
	& N'' \ar@{->>}[d]
	&
	&
\\
	L_{2}'' \ar[rr] |*+[o][F-]{\beta}
	&
	& M_{2}'' \ar@{->>}[rr] 
	&
	& N_{2}''
	&
	&
	}
\] 
 
 be a commutative 3D diagram subject to the following conditions:
 
\begin{enumerate}

\item any three-term sequence with arrows running in the same direction is exact (i.e., exact at the middle term);

 \item each arrow preceded by an arrow in the same direction is epic;
 
 \item the three middle three-term sequences $L''M''N''$, $M_{2}'M_{2}M_{2}''$, and $N'NN''$  on the front, the bottom and the right-hand faces of this cube are short-exact, i.e., each sequence is exact in the middle, the first map is monic, and the second map is epic.
 \smallskip

\end{enumerate}
Then the image of the connecting homomorphism $\Ker \alpha \longrightarrow L_{2}''$ (in the front face) is in $\Ker \beta$, and the composition of the connecting homomorphisms 
$\Ker \alpha \longrightarrow L_{2}''$ and $\Ker \beta \longrightarrow N_{2}'$ (in the bottom face) equals the negative of the connecting homomorphism $\Ker \alpha \longrightarrow N_{2}'$ (in the right-hand face).
\end{lemma}

\begin{proof}
First of all, because of the assumptions on the three short exact sequences, the connecting homomorphisms mentioned in the statement are indeed defined. The first assertion is now immediate. To prove the second assertion, pick an element $n_{1}'' \in \Ker \alpha \subset N_{1}''$. Then $m'' := 2 \circ 1^{-1}(n_{1}'') =  6 \circ 5  \circ 4^{-1} \circ 3^{-1}(n_{1}'')$. We also have $m \in M$ such that $7(m) = 8 \circ 3^{-1} (n_{1}'') =: n$. Let $\mu := (14) \circ (12)^{-1} \circ 9^{-1} \circ 6 (m)$.
Since $15(m) = 0$, we have $m_{2} : = 15(\mu - m) = (13) \circ (11)^{-1} \circ (10) \circ 9^{-1} (m'')$. Since $7(\mu) = 0$, we have $7(\mu - m) = - 8 \circ 3^{-1} (n_{1}'')$.
By construction, $6(\mu) = 6(m)$, and therefore $\mu - m = 16(m')$ for some 
$m' \in M'$. Now $(19) \circ (17)(m') = -n$. Therefore, 
$(18) \circ (17) (m')$ is negative the value of the connecting homomorphism 
$\Ker \alpha \longrightarrow N_{2}'$ on $n_{1}''$. Using the commutativity of the diagram again, we have the same value for $(20) \circ (21)^{-1} (m_{2})$, which is the value of the composition of the other two connecting homomorphisms on the same element.
 \end{proof}

\subsection{Spatial diagrams: top, back, and left-hand faces}

Now we look at the composition of connecting homomorphisms in the three remaining planes of the cube.

\begin{lemma}\label{L:horizontal-down}
 Let
 \[
\xymatrix@R25pt@C35pt
	{
	&
	& L_{1}' \ar[rr] \ar[d] \ar[lddd]
	&
	& M_{1}' \ar@[purple]@{->>}[rr] |*+[Fo]{2} \ar@[red][d] |*+[Fo]{6}
	\ar@[magenta]@{>->}[lddd] |*+[Fo]{5}
	&
	& N_{1}' \ar[d] |*+[o][F-]{\beta} \ar[lddd] 
\\
	&
	& L' \ar@[red]@{>->}[rr] |*+[Fo]{7} |! {[urr];[ddr]}\hole 
	\ar@[purple]@{->>}[d] |*+[Fo]{3}
	\ar@[blue]@{>->}[lddd] |*+[Fo]{9} |! {[ddl];[ddr]}\hole
	&
	& M' \ar@{->>}[rr] |*+[Fo]{11} |!{[urr];[ddr]}\hole \ar@{->>}[d] 
	\ar@{>->}[lddd] |*+[Fo]{12} |! {[ddl];[ddr]}\hole
	&
	& N' \ar@{->>}[d] \ar[lddd]
\\
	&
	& *+[F]{L_{2}'} \ar[rr] |! {[uurr];[dr]}\hole |!{[urr];[ddr]}\hole 
	\ar[lddd] |! {[dl];[dr]} \hole |! {[ddl];[ddr]} \hole  
	&
	& M_{2}' \ar@{->>}[rr] |! {[uurr];[dr]}\hole |!{[urr]; [ddr]} \hole 
	\ar[lddd] |! {[dl];[dr]} \hole |! {[ddl];[ddr]} \hole
	&
	& {N_{2}'} \ar[lddd]
\\
	& L_{1} \ar@[magenta][rr] |*+[Fo]{4} \ar@[blue][d] |*+[Fo]{8} 
	\ar@[purple]@{->>}[lddd] |*+[Fo]{1} 
	&
	& M_{1} \ar@{->>}[rr] \ar[d] |*+[Fo]{13} \ar@{->>}[lddd]
	&
	& N_{1} \ar[d] \ar@{->>}[lddd]
	&
\\
	& L \ar@{>->}[rr] |*+[Fo]{10} |! {[urr];[ddr]}\hole \ar@{->>}[d] 
	\ar@{->>}[lddd] |! {[ddl];[ddr]}\hole
	&
	& M \ar@{->>}[rr] |! {[urr];[ddr]}\hole \ar@{->>}[d] 
	\ar@{->>}[lddd]  |! {[ddl];[ddr]}\hole
	&
	& N \ar@{->>}[d] \ar@{->>}[lddd]
	&
\\
	& L_{2} \ar[rr] |! {[uurr];[dr]}\hole |! {[urr];[ddr]}\hole 
	\ar@{->>}[lddd] |! {[dl];[dr]} \hole |! {[ddl];[ddr]}\hole
	&
	& M_{2} \ar@{->>}[rr] |! {[uurr];[dr]}\hole  |! {[urr];[ddr]}\hole 
	\ar@{->>}[lddd] |! {[dl];[dr]} \hole |! {[ddl];[ddr]}\hole
	&
	& N_{2} \ar@{->>}[lddd]
	&
\\
	*+[F]{L_{1}''} \ar[rr] |*+[o][F-]{\alpha} \ar[d] |*+[o][F-]{\gamma}
	&
	& M_{1}'' \ar@{->>}[rr] \ar[d]
	&
	& N_{1}'' \ar[d]
	&
	&
\\
	L'' \ar[rr]  \ar@{->>}[d]
	&
	& M'' \ar@{->>}[rr] \ar@{->>}[d]
	&
	& N'' \ar@{->>}[d]
	&
	&
\\
	L_{2}'' \ar[rr]
	&
	& M_{2}'' \ar@{->>}[rr] 
	&
	& N_{2}''
	&
	&
	}
\] 
be a commutative 3D diagram subject to the following conditions:

\begin{enumerate}

\item any three-term sequence with arrows running in the same direction is exact;

 \item each arrow preceded by an arrow in the same direction is epic;
 
 \item the three middle three-term sequences $M_{1}'M_{1}M_{1}''$, $L'M'N'$, and $L'LL''$  on the top, the back,  and the left-hand  faces of this cube are short-exact, i.e., each sequence is exact in the middle, the first map is  monic, and the second map is epic. Moreover, the two horizontal sequences 
 $LMN$ and $M'MM''$ passing through the center of the cube are also short-exact.
 \smallskip

\end{enumerate}
Then the image of $\Ker \alpha \cap \Ker \gamma$ under the connecting homomorphism $\Ker \alpha \longrightarrow N_{1}'$ (in the top face) is in $\Ker \beta$, and on $\Ker \alpha \cap \Ker \gamma$  the composition of the connecting homomorphisms 
$\Ker \alpha \longrightarrow N_{1}'$ and 
$\Ker \beta \longrightarrow L_{2}'$ (in the back face) coincides
with the connecting homomorphism $\Ker \gamma \longrightarrow L_{2}'$ (in the left-hand face).
\end{lemma}

\begin{proof}
First, we show that the connecting homomorphism 
$\Ker \alpha \longrightarrow N_{1}'$ maps $\Ker \alpha \cap \Ker \gamma$ to $\Ker \beta$. Pick $l_{1}'' \in \Ker \alpha \cap \Ker \gamma \subset L_{1}''$ and let
$m' :=  6 \circ 5^{-1} \circ 4 \circ 1^{-1} (l_{1}'')$.
We need to show that $11(m') = 0$ or, equivalently, that $m'$ is in the image of 7. Let $ l := 8 \circ 1^{-1} (l_{1}'')$. By the commutativity of the diagram, $12(m')  = 10 (l)$.
As $l_{1}'' \in \Ker \gamma$, we have $l = 9(l')$ for some $l' \in L'$, and therefore 
$12 \circ 7(l') = 10(l) = 12(m')$. Since 12 is assumed to be a monomorphism, $m' = 7 (l')$, which is the desired claim.

Now we can prove the second claim. Because each of the morphisms 1, 2, and 3 belongs to two connecting homomorphisms, it suffices to show that
$7^{-1} \circ 6 \circ 5^{-1} \circ 4 = 9^{-1} \circ 8$. But the two morphisms become equal when we precompose them with the 
monomorphism $10 \circ 9$.
\end{proof}

\section{The asymptotic stabilization: the first construction}\label{S:first-const}

\subsection{The construction}

Our next goal is to introduce what we shall call the \texttt{asymptotic stabilization} of the tensor product, which is a limit of a sequence of maps between injective stabilizations of tensor products of iterated syzygy and cosyzygy modules. This can be done in three equivalent ways, the first one being dealt with in this section.
\smallskip

We begin by constructing a homomorphism 
$\Omega A\, \ot\, \Sigma B \longrightarrow A\, \ot\, B$
of abelian groups, where $A$ is a right $\Lambda$-module and $B$ is a left $\Lambda$-module. Choosing a projective resolution $P^{\dotr} \to A$ and an injective resolution $B \to I^{\dotr}$ and tensoring the short exact sequences 
\[
0 \longrightarrow \Omega A \longrightarrow P_{0} \longrightarrow A \longrightarrow 0 \quad \textrm{and} \quad  0 \longrightarrow B \longrightarrow I^{0} \longrightarrow \Sigma B \longrightarrow 0,
\]
we have a commutative diagram of solid arrows whose rows, columns, and diagonal are exact:
\begin{equation}\label{D:big}
\begin{gathered}
 \xymatrix@R24pt@C14pt
 	{
	& 0 \ar[dr] 
	& 
	& 0\ar[d]
	&
\\
	&
	&\Omega A\ \ot\ \Sigma B\ar[dr]\ar@{>..>}[r]
	& \Tor_1(A,\Sigma B) \ar[d]
\\
	& \Omega A\otimes B \ar[d] \ar[r]
	& \Omega A\otimes I^0 \ar[d] \ar[r]
	& \Omega A\otimes 	\Sigma B \ar[d] \ar[r] \ar[dr]
	& 0
\\
	0\ar[r]
	& P_0\otimes B \ar[r] \ar[d] 
	& P_0\otimes I^0 \ar@{->>}[r] \ar[d] 
	& P_0\otimes \Sigma B \ar[d] \ar@{>->}[dr]
	& \Omega A\otimes I^1 \ar[d]
\\
	 \Tor_1(A,\Sigma B) \ar[r]^{\alpha}
	& A\otimes B \ar[d] \ar[r]
	& A\otimes I^0 \ar@{->>}[r] \ar[d]
	& A\otimes \Sigma B \ar[d]
	& P_0\otimes I^1
\\
	& 0 
	& 0
	& 0
	} 
\end{gathered}
\end{equation}
%Using the fact that $P_{0} \otimes \blank$ is an exact functor and the snake lemma, we have 
%
%\begin{lemma}\label{L:delta-epi}
%The above solid diagram induces an exact sequence 
%\[
%\Tor_1(A, B) \lra \Tor_1(A, I^{0}) \lra \Tor_1(A,\Sigma B) \overset{\delta}\lra A \ot B \lra 0,
% \]
%where $\delta$ is (the corestriction of) the connecting homomorphism. If the injective $I^{0}$ is projective, then $\delta$ is an isomorphism. \qed
%\end{lemma}
As $P_{0} \otimes \blank$ is an exact functor, the bottom southeast map is monic. Since the composition of this map with $\Omega A\ \ot\ \Sigma B \lra  \Omega A\otimes \Sigma B \lra P_0\otimes \Sigma B$ is zero, by the universal property of kernels, we have the dotted map, making the top triangle commute. Notice that this map is monic. Restricting the connecting homomorphism in the snake lemma, 
%to $\Omega A\ \ot\ \Sigma B$, 
we have
%
%
%Applying the snake lemma yields the following diagram with an exact bottom row
%\dia
%	{
%	\Omega A\ar[d]\ \ot \ \Sigma B
%	&
%\\
%	\Tor_1(A,\Sigma B) \ar[r]
%	& A\otimes B \ar[r]
%	& A\otimes I^0 
%	}
%%Because the composition of the bottom maps is zero, 
%This diagram embeds in the commutative diagram
%\dia
%	{
%	&\Omega A \ar[d]\ \ot \ \Sigma B
%	&
%\\
%	& \Tor_1(A,\Sigma B)\ar[r] \ar[d]^{\delta} 
%	& A\otimes B \ar@{=}[d] \ar[r]
%	& A\otimes I^0 \ar@{=}[d]
%\\
%	0 \ar[r]
%	& A\ \ot\ B \ar[r]
%	& A\otimes B \ar[r]
%	& A\otimes I^0
%	} 
%which produces 
a map $\Omega A\ \ot\  \Sigma B \lra A \ot B$. Iteration of this process yields a sequence
\begin{equation}\label{Eq:directed}
\xymatrix
	{
	\ldots \ar[r] 
	& \Omega^2 A\ \ot\ \Sigma^2 B  \ar[r]^{\Delta_2} 
	& \Omega A\ \ot\ \Sigma B \ar[r]^{\Delta_1} 
	& A\ \ot\ B.
	}
\end{equation}

Next we want to show that any two choices for $\Delta_{1}$, and thus for any other 
$\Delta_{i}$, are isomorphic. Let ${P^{\dotr}}' \to A$ and $B \to I{^{\dotr}}'$ some other resolutions.
%\begin{equation}
% 0\longrightarrow B\longrightarrow {I^0}' \longrightarrow {I^1}' \longrightarrow \ldots
% \end{equation}
%be another injective resolution of $B$, and
%\begin{equation}
%\ldots \longrightarrow {P_1}' \longrightarrow {P_0}' \longrightarrow A \longrightarrow 0
% \end{equation}
%another projective resolution of $A$. 
Lifting the identity map on $A$, extending the identity map on $B$, and taking the tensor product results in a commutative 3D version of the diagram~(\ref{D:big}). By the naturality of the connecting homomorphism in the snake lemma, we have a commutative diagram with exact rows
\[
\xymatrix
	{
	\ldots \ar[r]
	& \Tor_1(A,\Sigma' B) \ar[rr] \ar[dd] \ar@{->>}[dr]
	&
	& A \otimes B \ar[r] \ar@{=}[dd]
	& A \otimes {I^{0}}' \ar[r] \ar[dd]
	& \ldots
\\
	&
	& A \ot' B 
	\ar[dd]^>>>>>>>>>>>>>>>>>>{\cong}_>>>>>>>>>>>>>>>>>>{\alpha} \ar@{>->}[ur]
	&
	& 
	&
\\
	\ldots \ar[r]
	& \Tor_1(A,\Sigma B) \ar ' [r][rr] \ar@{->>}[dr]
	&
	& A \otimes B \ar[r]
	& A \otimes {I^{0}} \ar[r]
	& \ldots
\\
	&
	& A \ot B \ar@{>->}[ur]
	&
	& 
	&
	}
\] 
By~\cite[Lemma 4.1)]{MR-1},
%~\ref{L:canonical}, 
$\alpha$ is a canonical isomorphism, in fact, an equality.\footnote{This assertion presupposes that we work, as was announced, in a module category. The reader who wishes to work in a general abelian category, should not use the word ``equality''.} On the other hand, the right-hand side of the 3D-version of~(\ref{D:big}) yields a commutative diagram of solid arrows
\[
\xymatrix@R15pt@C9pt
	{
	&
	&
	&
	& \Tor_1(A,\Sigma' B) \ar@{>->}[dd] \ar[dl]
	&
	&
\\
	&
	&
	& \Tor_1(A,\Sigma B) \ar@{>->} [dd]
	&
	&
	&
\\
	& 0 \ar[r] 
	& \Omega' A\, \ot\, \Sigma' B \ar@/^3pc/@{>.>}[rruu] \ar ' [r] [rr] 
	\ar[dl]^>>>>>>>>>>>>{\beta}_{\cong}
	&
	& \Omega' A \otimes \Sigma' B \ar[rr] \ar[dl] \ar ' [d][dd]
	&
	& \Omega' A \otimes {I^1}' \ar[dl] \ar[dd]
\\
	0 \ar[r] 
	& \Omega A\,  \ot\,  \Sigma B \ar@/^3pc/@{>.>}[rruu] \ar[rr]
	&
	& \Omega A \otimes \Sigma B \ar[rr] \ar[dd]
	&
	& \Omega A \otimes {I^1} \ar[dd]
	&
\\
	&  
	& 
	&
	& {P_{0}}' \otimes \Sigma' B \ar@{>->} ' [r] [rr] \ar[dl] 
	&
	& {P_{0}}' \otimes {I^1}' \ar[dl] 
\\ 
	& 
	&
	& P_{0} \otimes \Sigma B \ar@{>->}[rr]
	&
	& P_{0} \otimes {I^1} 
	&
	}
\] 
with exact rows and columns. The dotted arrows also come from the diagram~(\ref{D:big}) and make the triangles containing them commute. By~\cite[Lemmas 9.1 and 9.2]{MR-1},
%\ref{L:Omega} and~\ref{L:Sigma}, 
$\beta$ is a canonical isomorphism. Using the fact that the map 
$\Tor_1(A,\Sigma B) \longrightarrow \Omega A \otimes \Sigma B$ is  monic, we have that the curved square also commutes. Splicing it with the left-hand square containing $\alpha$ from the preceding diagram, we have a commutative square
\[
\xymatrix
	{
	\Omega' A\, \ot \, \Sigma' B \ar[r]^{\Delta_{1}'} \ar[d]^{\beta}_{\cong}
	& A \, \ot' \, B \ar[d]^{\alpha}_{\cong}
\\
	\Omega A\,  \ot\,  \Sigma B \ar[r]^{\Delta_{1}}
	& A \, \ot \, B
	}
\] 
with the vertical maps being canonical isomorphisms. This proves 
\begin{proposition}\label{P:any-two-choices}
 Any two choices for $\Delta_{1}$, and hence for any $\Delta_{i}$, based on the diagram~\eqref{D:big} are canonically isomorphic. \qed
\end{proposition}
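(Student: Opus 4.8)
The plan is to show that for any two sets of choices entering the construction of $\Delta_{1}$ --- a projective resolution of $A$ together with an injective resolution of $B$, ``primed'' versus ``unprimed'' --- the two resulting homomorphisms $\Delta_{1}'$ and $\Delta_{1}$ fit into a commutative square
\dia
	{
	\Omega' A\, \ot\, \Sigma' B \ar[r]^{\Delta_{1}'} \ar[d]^{\beta}_{\cong}
	& A\, \ot'\, B \ar[d]^{\alpha}_{\cong}
\\
	\Omega A\, \ot\, \Sigma B \ar[r]^{\Delta_{1}}
	& A\, \ot\, B
	}
whose vertical arrows $\alpha$ and $\beta$ are the canonical isomorphisms supplied by~\cite{MR-1}. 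Granting this, the assertion for $\Delta_{1}$ is immediate, and the assertion for an arbitrary $\Delta_{i}$ follows because, by construction, $\Delta_{i}$ is an instance of a $\Delta_{1}$ built from the iterated syzygy $\Omega^{i-1}A$, the iterated cosyzygy $\Sigma^{i-1}B$, and the tails of the two chosen resolutions, while these iterated (co)syzygies are themselves canonically independent of the choices by the well-definedness results of~\cite{MR-1}.

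The first step is to compare the two data sets. Lifting the identity map on $A$ to a chain map between the two projective resolutions, extending the identity map on $B$ to a map between the two injective resolutions, and applying $\blank\otimes\blank$, one obtains a morphism between the two three-dimensional versions of diagram~\eqref{big}. Naturality of the connecting homomorphism in the snake lemma then produces two commutative squares. The first relates the exact sequences $\Tor_1(A,\Sigma' B)\to A\otimes B\to A\otimes {I^{0}}'$ and $\Tor_1(A,\Sigma B)\to A\otimes B\to A\otimes I^{0}$ and identifies the induced comparison map $A\, \ot'\, B\to A\, \ot\, B$; by~\cite[Lemma 3.2]{MR-1} this comparison map is the canonical equality $\alpha$. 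The second, read off from the right-hand edge of the 3D diagram, relates the two monomorphisms $\Omega' A\, \ot\, \Sigma' B\hookrightarrow \Omega' A\otimes \Sigma' B$ and $\Omega A\, \ot\, \Sigma B\hookrightarrow \Omega A\otimes \Sigma B$ and identifies the comparison map $\beta$; by~\cite[Lemmas 8.1 and 8.2]{MR-1} this $\beta$ is the canonical isomorphism. Both $\alpha$ and $\beta$ are independent of all choices made.

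The last, and most delicate, step is to splice these two squares into the one displayed above. The subtlety is that $\Delta_{1}$ is not defined by a single diagram chase but by a two-stage construction: first the dotted factorization through $\Tor_1(A,\Sigma B)$, obtained from the universal property of the kernel of $\Omega A\otimes\Sigma B\to P_0\otimes\Sigma B$, and then the connecting homomorphism corestricted to $A\, \ot\, B$. Compatibility must therefore be verified for the composite, not merely stage by stage, and I expect the main obstacle to be precisely the commutativity of the outer ``curved'' square connecting $\Omega' A\, \ot\, \Sigma' B$ to $\Tor_1(A,\Sigma B)$. This is handled by the observation that the map $\Tor_1(A,\Sigma B)\longrightarrow \Omega A\otimes\Sigma B$ occurring in~\eqref{big} is a monomorphism: commutativity of the curved square can be tested after postcomposition with this monomorphism, and the resulting identity reduces to commutativity of genuine tensor-product squares, which holds by functoriality of $\blank\otimes\blank$ along the chosen maps of resolutions. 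Splicing the curved square with the $\alpha$-square then yields the required commutative square with $\alpha$ and $\beta$ as its vertical canonical isomorphisms, and the proposition follows.
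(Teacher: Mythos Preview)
Your proposal is correct and follows essentially the same approach as the paper: construct the 3D comparison diagram from maps of resolutions, invoke naturality of the connecting homomorphism, identify $\alpha$ via~\cite[Lemma~3.2]{MR-1} and $\beta$ via~\cite[Lemmas~8.1 and~8.2]{MR-1}, then use the monicity of $\Tor_1(A,\Sigma B)\to\Omega A\otimes\Sigma B$ to force commutativity of the curved square before splicing. The paper's argument is structurally identical, differing only in that it displays the two intermediate diagrams explicitly rather than describing them in prose.
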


Arguments very similar to the ones just used yield

\begin{proposition}\label{P:functorial}
 The homomorphism $\Delta_{1} : \Omega A\,  \ot\,  \Sigma B
 \longrightarrow A \, \ot \, B$, and hence any~$\Delta_{i}$, is functorial in both $A$ and $B$. \qed
\end{proposition}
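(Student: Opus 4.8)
The plan is to establish functoriality of $\Delta_1$ by imitating, in the two-variable setting, the proof of Proposition~\ref{P:any-two-choices}. The point is that Proposition~\ref{P:any-two-choices} already handled the ``change of resolution'' half of the problem, and a morphism $f \colon A \to A'$ of right $\Lambda$-modules (resp. $g \colon B \to B'$ of left $\Lambda$-modules) can be treated by exactly the same device: lift $f$ to a chain map between projective resolutions of $A$ and $A'$, extend $g$ to a chain map between injective resolutions of $B$ and $B'$, tensor everything, and obtain a 3D version of diagram~\eqref{big} connecting the two planar diagrams. First I would fix such lifts; note that on degree-zero terms they induce $\Omega f \colon \Omega A \to \Omega A'$ and $\Sigma g \colon \Sigma B \to \Sigma B'$, which are well defined up to the usual homotopy indeterminacy, but that indeterminacy disappears after passing to the stable category, which is where $\ot$ lives.

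Next I would run the naturality of the snake-lemma connecting homomorphism through this 3D diagram, exactly as in the proof of Proposition~\ref{P:any-two-choices}. This yields a commutative diagram with exact rows in which the map $A \ot B \to A' \ot' B'$ induced on the corestricted kernels is precisely $f \ot g$ (here one invokes \cite[Lemma 3.2]{MR-1} in the form that identifies the induced map on the relevant subobject of $A \otimes B$ with the morphism $f \ot g$ of injective stabilizations). Simultaneously, the right-hand face of the 3D diagram produces, via \cite[Lemmas 8.1 and 8.2]{MR-1}, a commutative square whose left vertical arrow is the canonically induced map $\Omega A \ot \Sigma B \to \Omega A' \ot \Sigma' B'$, i.e. $\Omega f \ot \Sigma g$. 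Splicing these two squares along the common monomorphism $\Tor_1(A,\Sigma B) \to \Omega A \otimes \Sigma B$ (again using that this map is monic, as established in the construction) gives the commutative square
\[
\xymatrix
	{
	\Omega A\, \ot\, \Sigma B \ar[r]^{\Delta_1} \ar[d]_{\Omega f\, \ot\, \Sigma g}
	& A \, \ot \, B \ar[d]^{f\, \ot\, g}
\\
	\Omega A'\, \ot\, \Sigma B' \ar[r]^{\Delta_1'}
	& A' \, \ot' \, B'
	}
\]
which is the assertion for $\Delta_1$; the statement for arbitrary $\Delta_i$ follows by iterating the construction, since $\Delta_i$ is $\Delta_1$ applied to $\Omega^{i-1}A$ and $\Sigma^{i-1}B$ and the syzygy and cosyzygy operations are themselves functorial on the relevant stable categories.

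The main obstacle I anticipate is bookkeeping rather than conceptual: one must check that the chosen lifts of $f$ and $g$ genuinely assemble into a commutative 3D diagram refining~\eqref{big}, that all the faces needed to apply \cite[Lemma 3.2]{MR-1} and \cite[Lemmas 8.1 and 8.2]{MR-1} are present and commute, and — crucially — that the final square is independent of the choice of lifts. This last independence is what makes the assignment a \emph{functor} and not merely a family of compatible maps; it is handled by combining the homotopy-invariance of $\ot$ (a morphism factoring through a projective on one side, or an injective on the other, induces the zero map on $\ot$, as recalled from \cite{MR-1}) with Proposition~\ref{P:any-two-choices}, which already absorbed the ambiguity coming from the resolutions. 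Because all of these ingredients are of the same type as those used in Propositions~\ref{P:any-two-choices}, I expect the proof to be a routine, if somewhat lengthy, diagram chase, and I would present it as such, referring back to the earlier argument for the pieces that transcribe verbatim.
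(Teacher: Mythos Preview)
Your approach is correct and is essentially the same as the paper's: the paper simply states that ``arguments very similar to the ones just used'' (i.e., those proving Proposition~\ref{P:any-two-choices}) yield functoriality, and leaves it at that with a \qed. You have fleshed out exactly what those similar arguments are, so your write-up is in fact more detailed than the paper's own treatment.
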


For any integer $n$ (including negative values), the process of constructing the sequence~(\ref{Eq:directed}) may be repeated with $\Omega^{k+n} A$ in place of $\Omega^{k}A$, yielding sequences 
\begin{equation}
M_n(A,B):=\{\Omega^{k+n} A\ \ot\ \Sigma^k B , \Delta_{k+1}\}_{k,k+n \geq 0} 
\end{equation}

\begin{definition}
The {asymptotic stabilization $\T_n(A,\blank)$ of the left tensor product in degree $n$ with coefficients in the right $\Lambda$-module $A$} is  
\begin{align}
 \begin{split}
 \T_n(A,\blank)(B) & := \T_n(A,B) \\
& := \underset{k,k+n\geq 0}{\varprojlim} \Omega^{k+n} A\ \ot\ \Sigma^k B=\varprojlim M_n(A,B)
\end{split}
\end{align}
\end{definition}

It is easy to see that each $\T_{n}$ is a bifunctor additive in each variable. We shall say that $\T_{n}(A, \blank)$ is the \texttt{active} asymptotic stabilization. It is injectively stable.  We shall say that $\T_{n} (\blank, B)$ is the \texttt{inert} asymptotic stabilization. It is projectively stable.

%the $\T_n(A,\blank) : \Lambda$-$\Mod \longrightarrow \ab$, $n\in \Z$ are covariant additive functors from the category of left $\Lambda$-modules to the category of abelian groups. It is plain each $\T_n(A,\blank)$ is injectively stable.  
%
%Similarly, the $\T_n(\blank, B) : \Mod$-$\Lambda \longrightarrow \ab$, $n\in \Z$ are covariant additive functors from the category of right $\Lambda$-modules to the category of abelian groups. These functors are projectively stable.  

Next we observe that the $\T_{n}$ allow infinite dimension shifts in both directions if one utilizes both arguments.

\begin{lemma}\label{L:dim-shift}
For all integers $n$ and all \texttt{nonnegative} integers $k$, %and  $j \in \Z_{\geq 0}$ 
there are canonical isomorphisms of functors
\[ 
	\T_n(A,\Sigma^k\blank) \cong  \T_{n-k}(A,\blank)
\]
and
\[
	\T_n(\Omega^{k}A,\blank) \cong  \T_{n+k}(A,\blank)
\]	

\end{lemma}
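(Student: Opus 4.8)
\textbf{Proof proposal for Lemma \ref{L:dim-shift}.}

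The plan is to reduce both isomorphisms to the behavior of the pro-objects (i.e., inverse systems) $M_n(A,B)$ under reindexing, together with the dimension-shift isomorphisms for the injective stabilization $\ot$ of the plain tensor product that were established in \cite{MR-1} (the same ones invoked to prove Proposition \ref{P:any-two-choices}). First I would record the relevant one-step isomorphisms: for a left module $B$, there is a canonical isomorphism $\Omega^{k+n}A \ot \Sigma(\Sigma^{j}B) \cong \Omega^{k+n}A \ot \Sigma^{j+1}B$ coming from the fact that $\Sigma$ is well defined up to canonical isomorphism on the stable category together with the functoriality (Proposition \ref{P:functorial}) of $\Delta_{1}$ and of $\ot$ itself; dually, using $\Omega^{j+1}A = \Omega(\Omega^{j}A)$, there is a canonical isomorphism $\Omega(\Omega^{j}A) \ot \Sigma^{k}B \cong \Omega^{j+1}A \ot \Sigma^{k}B$. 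The key point is that these one-step isomorphisms are compatible with the structure maps $\Delta_{i}$ of the towers, so they assemble into isomorphisms of inverse systems, and passing to $\varprojlim$ gives the claimed isomorphisms of functors.

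For the first isomorphism, fix $k \ge 0$ and consider the functor $B \mapsto \T_{n}(A, \Sigma^{k}B) = \varprojlim_{j,\, j+n \ge 0} \Omega^{j+n}A \ot \Sigma^{j}(\Sigma^{k}B)$. I would rewrite the $j$-th term as $\Omega^{j+n}A \ot \Sigma^{j+k}B$ and substitute $i = j+k$, so that $j+n = i + (n-k)$ and the term becomes $\Omega^{i + (n-k)}A \ot \Sigma^{i}B$; the index constraints $j \ge 0$, $j+n \ge 0$ translate into $i \ge k$ and $i + (n-k) \ge 0$. Since the system defining $\T_{n-k}(A,B)$ runs over all $i \ge 0$ with $i + (n-k) \ge 0$, and discarding the finitely many initial terms with $0 \le i < k$ is cofinal, the two inverse systems have canonically isomorphic limits. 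The substance is to check that under this reindexing the structure maps agree, i.e.\ that the square relating $\Delta_{j+1}$ (for the $\Sigma^{k}\blank$ tower) to $\Delta_{i+1}$ (for the plain tower) commutes up to the canonical identification $\Sigma^{j}\Sigma^{k} \cong \Sigma^{j+k}$; this follows from the construction of $\Delta_{1}$ in diagram \eqref{big}, which only depends on a choice of injective resolution of the second argument, and an injective resolution of $\Sigma^{k}B$ is just a shift of the chosen resolution of $B$.

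For the second isomorphism, fix $j \ge 0$ and consider $B \mapsto \T_{n}(\Omega^{j}A, B) = \varprojlim_{k,\, k+n \ge 0} \Omega^{k+n}(\Omega^{j}A) \ot \Sigma^{k}B = \varprojlim_{k} \Omega^{k+n+j}A \ot \Sigma^{k}B$, using $\Omega^{k+n}\Omega^{j} = \Omega^{k+n+j}$ up to canonical isomorphism. The $k$-th term is exactly the $k$-th term of the system defining $\T_{n+j}(A,B)$, and the index set $k \ge 0,\ k+n \ge 0$ versus $k \ge 0,\ k+(n+j) \ge 0$ agree when $j \ge 0$ since then $k + n + j \ge k+n \ge 0$ is automatic once $k+n\ge 0$; conversely $k+n+j\ge 0$ with $k\ge 0$ need not force $k+n\ge 0$, but again the finitely many extra terms in the larger system are cofinal, so the limits coincide. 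Once more one must verify that the structure maps $\Delta_{k+1}$ for the two towers are identified under $\Omega^{k+n}\Omega^{j} \cong \Omega^{k+n+j}$; here I would appeal to Proposition \ref{P:functorial} (functoriality of $\Delta_{1}$ in the first argument) and to \cite[Lemma 3.2]{MR-1}, exactly as in the proof of Proposition \ref{P:any-two-choices}, noting that a projective resolution of $\Omega^{j}A$ is obtained by truncating and shifting a projective resolution of $A$.

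The main obstacle I expect is purely bookkeeping rather than conceptual: making precise the compatibility of the one-step maps $\Delta_{i}$ with the reindexing isomorphisms, so that one genuinely obtains a morphism (indeed isomorphism) of inverse systems and not merely a termwise isomorphism. This amounts to re-examining the diagram \eqref{big} used to define $\Delta_{1}$ and checking that replacing $B$ by $\Sigma^{k}B$ (resp.\ $A$ by $\Omega^{j}A$) with the shifted resolutions produces the identical diagram up to the canonical identifications, which is where Propositions \ref{P:any-two-choices} and \ref{P:functorial} do the real work. Since cofinal subsystems have the same inverse limit, once the system-level isomorphisms are in hand the passage to $\varprojlim$ and the verification of naturality in $B$ are routine. \qed
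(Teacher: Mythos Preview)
Your proposal is correct and follows essentially the same approach as the paper: both argue that the inverse systems defining the two sides are literally shifts of one another (with the same structure maps), hence have the same limit, the only difference being that you spell out the cofinality and compatibility-of-$\Delta$ bookkeeping in detail while the paper dispatches it in a single sentence.
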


\begin{proof}
The sequences (including the structure maps) for the components of the former (respectively, latter) pair of functors at any right $\Lambda$-module can be obviously chosen to be shifts of each other. 
%Now apply Propositions~\ref{P:any-two-choices} and~\ref{P:functorial}.
\end{proof}

Now we want to discuss the vanishing of the functors 
$\T_{\bullet}(A, \blank)$. The first result is an an immediate consequence of the definitions.

\begin{proposition}
 If the right global dimension of $\Lambda$ is finite then 
 $\T_{n}(A, \blank) = 0$ for all integers $n$. \qed
\end{proposition}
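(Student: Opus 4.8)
The plan is to show, for each fixed left $\Lambda$-module $B$, that the tower $M_n(A,B)$ whose inverse limit is $\T_n(A,B)$ has all but finitely many terms equal to zero, whence the limit collapses. Write $d$ for the (finite) right global dimension of $\Lambda$. Then every right $\Lambda$-module has projective dimension at most $d$, so for the chosen resolution~\eqref{proj} the syzygies $\Omega^m A$ are projective (in fact $0$ when $m > d$) for all $m \ge d$; in particular they are flat.

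Next I would record that $\ot$ vanishes on a flat first argument: if $P$ is a flat right $\Lambda$-module, then $P \ot \blank = 0$. Within this paper this is immediate from \lemref{L:delta-epi}, whose exact sequence exhibits the connecting map as an epimorphism $\delta : \Tor_1(P, \Sigma B) \twoheadrightarrow P \ot B$; since $\Tor_1(P, \Sigma B) = 0$ for flat $P$, we get $P \ot B = 0$ for every $B$. (Conceptually, $P \otimes \blank$ is then exact and the injective stabilization of an exact functor vanishes.) Combining the two observations, every term $\Omega^{k+n} A \ot \Sigma^k B$ of $M_n(A,B)$ with $k + n \ge d$ is zero.

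Finally I would invoke the elementary fact that the inverse limit of a tower that is eventually zero is zero: a compatible family vanishes in every trivial term, and hence, by applying the structure maps $\Delta_i$, in all the earlier terms as well. Therefore $\T_n(A,B) = \varprojlim M_n(A,B) = 0$; since $B$ and $n$ were arbitrary, $\T_n(A,\blank) = 0$ for every $n \in \Z$. I do not expect any genuine obstacle here: the one point deserving care is the vanishing $P \ot \blank = 0$ for flat $P$, and that is already in hand via \lemref{L:delta-epi} (or the general theory of \cite{MR-1}); the inverse-limit step is purely formal. This is exactly why the excerpt can call the statement an immediate consequence of the definitions.
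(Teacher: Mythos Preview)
Your argument is correct and matches what the paper has in mind when it calls the result an immediate consequence of the definitions: high syzygies of $A$ are projective, hence $\Omega^{k+n}A \ot \Sigma^{k}B = 0$ for $k+n \ge d$, and the inverse limit of an eventually zero tower is zero. One minor quibble: the parenthetical ``in fact $0$ when $m>d$'' is not true for an arbitrary projective resolution (only for a minimal one, when that exists), but you only use projectivity, so the slip is harmless.
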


\begin{proposition}\label{P:flat-dim}
 If the flat dimension of $A$ is finite, then $\T_{n}(A, \blank) = 0$ for all integers $n$.
\end{proposition}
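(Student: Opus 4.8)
The plan is to reduce, via the dimension shift in the fixed variable, to the case when $A$ is flat, and then to check that the whole inverse system defining $\T_n(A,\blank)$ is already identically zero in that case.

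Let $d$ be the (finite) flat dimension of $A$, and fix a projective resolution of $A$. Since projectives are flat, iterated dimension shifting for $\Tor$ gives $\Tor_i(\Omega^d A,\blank)\cong\Tor_{i+d}(A,\blank)$ for all $i\ge 1$, and the right-hand side vanishes because $i+d>d\ge\mathrm{fd}(A)$; hence $\Omega^d A$ is flat. By the second isomorphism of \lemref{L:dim-shift}, $\T_N(A,\blank)\cong\T_{N-d}(\Omega^d A,\blank)$ for every $N\in\Z$, so the proposition reduces to showing that $\T_m(F,\blank)=0$ for every flat right $\Lambda$-module $F$ and every $m\in\Z$.

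So assume $F$ is flat. The inverse system $M_m(F,\blank)(B)$ has $k$-th term $\Omega^{k+m}F\ \ot\ \Sigma^k B$. Every syzygy of a flat module is again flat — once more by dimension shifting, $\Tor_i(\Omega^{j}F,\blank)\cong\Tor_{i+j}(F,\blank)=0$ for $i\ge 1$ — so each $\Omega^{k+m}F$ is flat. Applying \lemref{L:delta-epi} with the right module $\Omega^{k+m}F$ in place of $A$ and the left module $\Sigma^k B$ in place of $B$ produces an epimorphism $\Tor_1(\Omega^{k+m}F,\Sigma^{k+1}B)\twoheadrightarrow\Omega^{k+m}F\ \ot\ \Sigma^k B$, and the source is zero since $\Omega^{k+m}F$ is flat. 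Thus every term of the system is zero, whence $\T_m(F,B)=\varprojlim M_m(F,B)=0$; unwinding the reduction completes the argument.

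I do not anticipate a genuine obstacle: the proof is short once \lemref{L:delta-epi} and \lemref{L:dim-shift} are in hand. The only steps requiring any care are the two invocations of dimension shifting for $\Tor$ — that $\Omega^d A$ is flat, and that syzygies of flat modules are flat — which deserve an explicit word, since $\Omega$ is defined only relative to a chosen projective resolution. One could also bypass \lemref{L:dim-shift} entirely: for a general $A$ with $\mathrm{fd}(A)=d$, the term $\Omega^{k+n}A\ \ot\ \Sigma^k B$ already vanishes whenever $k+n\ge d$, i.e.\ for all large $k$, and since the structure maps of the system decrease the index, every element of the inverse limit factors through one of these zero terms and is therefore $0$. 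I would nonetheless present the reduction above, as it leaves the cleanest possible statement to verify.
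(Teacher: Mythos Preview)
Your argument is correct. The core mechanism is the same as the paper's: the terms of the inverse system are controlled by $\Tor_1$ of an iterated syzygy of $A$, and once that syzygy is flat the term vanishes. The paper, however, proceeds more directly and uses the other half of the sandwich in diagram~\eqref{big}: it invokes the \emph{monomorphism} $\Omega^{k+n}A\,\ot\,\Sigma^{k}B \hookrightarrow \Tor_1(\Omega^{k+n-1}A,\Sigma^{k}B)$ (the dotted arrow), whereas you invoke the \emph{epimorphism} $\delta:\Tor_1(\Omega^{k+n}A,\Sigma^{k+1}B)\twoheadrightarrow \Omega^{k+n}A\,\ot\,\Sigma^{k}B$ from Lemma~\ref{L:delta-epi}. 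Either direction kills the term once the relevant syzygy has index at least $d$, so the paper skips your preliminary reduction via Lemma~\ref{L:dim-shift} to the case of flat $A$; your own parenthetical alternative at the end is in fact closer to what the paper does. The reduction buys you the cleaner statement ``every term is zero'' rather than ``eventually every term is zero,'' at the cost of an extra paragraph.
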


\begin{proof}
 As the diagram~\eqref{D:big} shows, we have an injection
$\Omega A\,  \ot\,  \Sigma B \longrightarrow 
 \Tor_{1}(A, \Sigma B)$. In particular, 
 $\Omega^{n+k} A\, \ot\, \Sigma^{k} B$, $n+k, k \geq 1$ embeds in  $\Tor_{1}(\Omega^{n+ k-1}A, \Sigma^{k} B)$. But the latter vanishes for $n + k - 1 \geq \textrm{fl.} \dim\, A$.
\end{proof}

It is known that the vanishing of stable \texttt{cohomology} in one degree implies its vanishing in all degrees. We do not know if a similar statement is true for $\T_{\bullet}(A, \blank)$. A partial result is provided by 

\begin{proposition}
 If $\T_n(A,\blank) = 0$ for some integer $n$, then $\T_m(A,\blank) = 0$ for all $m < n$.  If, in addition, $\Lambda$ is quasi-Frobenius, then $\T_m(A,\blank) = 0$ for all  $m \in \Z$. 
\end{proposition}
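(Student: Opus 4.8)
The plan is to derive both statements from the dimension-shift isomorphisms of \lemref{L:dim-shift}, using for the second part the extra rigidity of the stable module category over a quasi-Frobenius ring. For the first assertion, note that \lemref{L:dim-shift} gives $\T_n(A,\Sigma^k\blank)\cong\T_{n-k}(A,\blank)$ for every $k\ge 0$. When $\T_n(A,\blank)=0$, the functor on the left is the composite of the $k$-fold cosyzygy functor with the zero functor, hence identically zero; thus $\T_{n-k}(A,\blank)=0$ for all $k\ge 0$, and in particular $\T_m(A,\blank)=0$ for every $m\le n$. This already covers the case $m<n$.

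For the second assertion I must also handle $m>n$, and here is where quasi-Frobeniusness enters. First, each $\T_m(A,\blank)$ vanishes on injective modules — it is injectively stable, as remarked after its definition — hence it sends to zero every morphism that factors through an injective, and therefore factors through the quotient category of left $\Lambda$-modules modulo injectives; so $\T_m(A,C)$ depends only on the class of $C$ in that category. Over a quasi-Frobenius ring the projective and the injective modules coincide, so this quotient category is the stable module category $\Lambda$-\underline{Mod}, on which the syzygy functor $\Omega$ is an autoequivalence with quasi-inverse $\Sigma$; in particular $C\cong\Sigma^k\Omega^kC$ in $\Lambda$-\underline{Mod} for every $k\ge 0$. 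Given $m>n$, set $k:=m-n>0$; then
\[
\T_m(A,C)=\T_{n+k}(A,C)\cong\T_{n+k}\bigl(A,\Sigma^k(\Omega^kC)\bigr)\cong\T_n(A,\Omega^kC)=0,
\]
the first isomorphism being the invariance just established, the second being \lemref{L:dim-shift} applied with second argument $\Omega^kC$, and the last equality holding because $\T_n(A,\blank)=0$. Hence $\T_m(A,\blank)=0$ for all $m\in\Z$.

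I do not expect a serious obstacle; the argument is essentially bookkeeping once \lemref{L:dim-shift} is available. The two points deserving care are, first, keeping the index ranges $k,k+m\ge 0$ of the inverse systems aligned when applying the dimension shift, so that one genuinely lands in the system computing $\T_n(A,\Omega^kC)$; and second, invoking the stable isomorphism $C\cong\Sigma^k\Omega^kC$ in the correct category — it is an isomorphism modulo projectives, and it may legitimately be fed into $\T_m(A,\blank)$ only because over a quasi-Frobenius ring ``modulo projectives'' coincides with ``modulo injectives'', which is precisely the hypothesis that upgrades the one-sided shift of \lemref{L:dim-shift} into genuine periodicity.
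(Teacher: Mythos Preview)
Your proof is correct and follows essentially the same route as the paper: both parts rest on the dimension-shift isomorphism of \lemref{L:dim-shift}, and for the quasi-Frobenius case both use that $B\simeq\Sigma^{k}\Omega^{k}B$ to reduce $\T_{n+k}(A,B)$ to $\T_{n}(A,\Omega^{k}B)=0$. The only stylistic difference is that the paper obtains $B\simeq\Sigma^{k}\Omega^{k}B$ concretely---the projective resolution of $B$, being a sequence of injectives, exhibits $B$ as an honest $k$th cosyzygy of $\Omega^{k}B$---whereas you pass through the stable category and invoke injective stability of $\T_{m}(A,\blank)$; both are valid.
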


\begin{proof}
The first assertion is an immediate consequence of the first isomorphism of Lemma~\ref{L:dim-shift}. Suppose now that 
$\Lambda$ is quasi-Frobenius. Since projective modules are injective, for any positive integer $k$,  any right $\Lambda$-module $B$ is a $k$th cosyzygy module in an injective resolution of $\Omega^{k}B$, i.e., 
$B \simeq \Sigma^{k}\Omega^{k}B$. Therefore,
\[
\T_{n+k}(A,B) \cong \T_{n+k}(A, \Sigma^{k}\Omega^{k}B)
\cong \T_{n}(A, \Omega^{k} B) = 0.
\]
\end{proof}

\subsection{The connectedness property}

Now we want to define, for each short exact sequence 
$0 \longrightarrow B' \longrightarrow B \longrightarrow B'' \longrightarrow 0$
of left $\Lambda$ modules, connecting homomorphisms 
\[
\omega_{n} : \T_n(A, B'') \longrightarrow \T_{n-1}(A, B')
\] 
and show that ($\T_\bullet(A,\blank), \omega_{\bullet}$) is a connected sequence of functors.\footnote{Any sequence of additive functors can be made connected by choosing the zero map as the connecting homomorphism. Our choice will be nonzero.} 

We continue to assume that the connecting homomorphism in the snake lemma is defined by pushing and pulling elements along a staircase pattern, as in the standard proof of the lemma.

By Lemma~\ref{L:dim-shift}, it suffices to define 
\[
\omega_{1} : \T_1(A, B'') \longrightarrow \T_{0}(A, B').
\]
To this end, we use the horse-shoe lemma and construct a commutative diagram 
 
\begin{equation}\label{horizontal1}
\begin{gathered}
 \xymatrix
	{
	& 0 \ar[d]
	& 0 \ar[d]
	& 0 \ar[d]
	&
\\
	0 \ar[r]
	& B' \ar[r] \ar[d]
	&  {I^{0}}' \ar[r] \ar[d]
	& \Sigma B'  \ar[r] \ar[d]^{\gamma}
	& 0
\\
	0 \ar[r]
	& B \ar[r] \ar[d]
	& I^{0} \ar[r] \ar[d]
	& \Sigma B \ar[r] \ar[d]
	& 0
\\
	0 \ar[r]
	& B'' \ar[r] \ar[d]
	& {I^{0}}''  \ar[r] \ar[d]
	& \Sigma B'' \ar[r] \ar[d]
	& 0
\\
	& 0 
	& 0 
	& 0 
	&
	}
\end{gathered}
\end{equation}
where the rows and columns are exact, and the middle column is a split-exact sequence of injective modules. We will also need the embedding $0 \longrightarrow  \Sigma B' \overset{\epsilon}{\longrightarrow} {I^{1}}'$ from the injective resolution of $B'$. Tensoring this diagram with $\Omega A$ (and flipping it about the diagonal) gives us another commutative diagram  of solid arrows 
 
\begin{equation}\label{connect11}
\begin{gathered}
\xymatrix@R15pt@C10pt
	{
	&
	&
	&
	& 0 \ar[d]
	&
\\
	& 
	&
	& 
	& \Omega A\, \ot\, B'' \ar[d]
	&
\\
	& \Omega A \otimes B' \ar[rr] \ar[d]
	&
	& \Omega A \otimes B \ar[r] \ar[d]
	& \Omega A \otimes B'' \ar[r] \ar[d]
	& 0
\\
	0 \ar[r]
	& \Omega A \otimes {I^{0}}' \ar[rr] \ar[dd]
	&
	& \Omega A \otimes I^{0} \ar[r] \ar[dd]
	& \Omega A \otimes {I^{0}}'' \ar[r] \ar[dd]
	& 0
\\
	\Omega A\, \ot\, \Sigma B' \ar@{>->}[dr]
	&
	&
	&
	&
	&
\\
	& \Omega A \otimes \Sigma B' \ar[rr]^{1 \otimes \gamma} \ar[dd] 
	\ar[dr]^{1 \otimes \epsilon}
	&
	& \Omega A \otimes \Sigma B \ar[r] \ar[dd] \ar@{.>}[dl]
	& \Omega A \otimes \Sigma B'' \ar[r] \ar[dd]
	& 0
\\
	&
	& \Omega A \otimes {I^{1}}'
	&
	&
	&
\\
	& 0 
	&
	& 0 
	& 0 
	&
	}
\end{gathered}
\end{equation}
with exact rows, columns, and the lower-left diagonal. The snake lemma yields 
a map $\kappa :  \Omega A\, \ot\, B'' \longrightarrow  \Omega A \otimes \Sigma B'$
with $(1 \otimes \gamma) \circ \kappa = 0$. On the other hand, since ${I^{1}}'$ is injective, $\epsilon$ extends along $\gamma$ and therefore  $1 \otimes \epsilon$
extends along $1 \otimes \gamma$. Hence $(1 \otimes \epsilon) \circ \kappa = 0$,
and $\kappa$ factors through $\Ker (1 \otimes \epsilon) = \Omega A\, \ot\, \Sigma B'$. We have thus constructed a map $\kappa_{1}^{1} : \Omega A\, \ot\, B'' \longrightarrow 
\Omega A\, \ot\, \Sigma B'$. 
%\fbox{Of course, the foregoing construction works with any $X$ in place of $\Omega A$.}
Similarly, we have maps $\kappa_{1}^{i} : \Omega^{i} A\, \ot\, \Sigma^{i-1} B'' \longrightarrow \Omega^{i} A\, \ot\, \Sigma^{i} B'$ for each natural $i$.

The next step, as one would expect, is to show that the maps $\kappa^{i}_{1}$ are compatible with the structure maps~$\Delta$. Actually, as we shall see, this is not true since the corresponding  squares anticommute rather than commute. This motivates 

\begin{definition}
 For each integer $i$, set $\omega^{i}_{1} := (-1)^{i}\kappa^{i}_{1}$.
\end{definition}

Notice that both the $\Delta$ and the $\kappa$ are connecting homomorphisms in suitable diagrams.

%We can now return to the asymptotic stabilization of the tensor product.

\begin{lemma}
 Under the above assumptions and notation, the diagram 

 \[
\xymatrix
	{
	\Omega^{2} A\, \ot\, \Sigma B'' \ar[r]^{\omega^{2}_{1}}
	\ar[d]^{\Delta_{1}}
	& \Omega^{2} A\, \ot\, \Sigma^{2} B' \ar[d]^{\Delta_{2}}
\\	
	\Omega A\, \ot\, B'' \ar[r]^{\omega^{1}_{1}} 
	& \Omega A\, \ot\, \Sigma B'
	}
\] 
commutes. 
\end{lemma}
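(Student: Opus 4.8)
The plan is to unravel both composites down to the level of the maps they are built from and then invoke Lemma~\ref{L:down-horizontal} (or Lemma~\ref{L:horizontal-down}) to account for the anticommutativity, which the sign $(-1)^i$ in the definition of $\omega^i_1$ exactly absorbs. Concretely: both $\Delta_i$ and $\kappa^i_1$ were built as connecting homomorphisms, corestricted (resp. precomposed through a kernel), in spatial diagrams obtained by tensoring the relevant short exact sequences together. So I would first assemble one large spatial diagram whose three families of short exact sequences are (a) the cosyzygy sequence $0 \to \Sigma B' \to I^1{}' \to \Sigma^2 B' \to 0$ and its analogues, (b) the horse-shoe sequence $0 \to B' \to B \to B'' \to 0$ with its cosyzygy shifts from \eqref{horizontal1}, and (c) the syzygy sequence $0 \to \Omega^2 A \to P_1 \to \Omega A \to 0$ for $A$; tensoring all three together produces a cube-shaped configuration of exactly the type axiomatized in Section~\ref{S:conn-homs}.

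The key steps, in order: (1) verify that the hypotheses of Lemma~\ref{L:down-horizontal} (or \ref{L:horizontal-down}, whichever orientation matches) are met by this tensored diagram — this amounts to checking exactness of the various three-term rows and that the three distinguished middle sequences are short exact, which follows from $P_1 \otimes \blank$ and $P_0 \otimes \blank$ being exact, from the split-exactness of the middle column of \eqref{horizontal1}, and from the injectivity of the $I^j$ used to build the cosyzygies; (2) identify $\Delta_i$ and $\Delta_{i+1}$ with the two connecting homomorphisms along one pair of faces of the cube, using Lemma~\ref{L:delta-epi} and the kernel-factorization established around diagram~\eqref{big}; (3) identify $\kappa^i_1$ and $\kappa^{i+1}_1$ with the connecting homomorphisms along the complementary faces, using the kernel factorization through $\Ker(1\otimes\epsilon)$ described after diagram~\eqref{connect11}; (4) apply the relevant lemma from Section~\ref{S:conn-homs} to conclude that the two composites $\Delta_1 \circ \kappa^2_1$ and $\kappa^1_1 \circ \Delta_2$ differ by a sign; (5) feed in the definition $\omega^i_1 = (-1)^i \kappa^i_1$, so that the sign from step (4) is cancelled and the square in the statement commutes.

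The main obstacle I expect is bookkeeping in step (2)–(3): the maps $\Delta_i$ and $\kappa^i_1$ are not literally connecting homomorphisms but corestrictions/restrictions of them along the monomorphisms $\Omega^j A \ot \Sigma^k B \hookrightarrow \Omega^j A \otimes \Sigma^k B$ and $\Tor_1 \hookrightarrow \Omega A \otimes \blank$, so I must check that passing to these sub/quotient objects is compatible with the conclusion of the anticommutativity lemma — i.e., that the kernel factorizations on the two faces are ``the same'' kernel factorization when the faces are glued. This is exactly the sort of diagram chase already rehearsed in the passages establishing Propositions~\ref{P:any-two-choices} and~\ref{P:functorial} and in the proof of Lemma~\ref{L:gamma}, so the argument should go through by the same technique: use that the relevant vertical maps into $\Omega^j A \otimes (\blank)$ are monic to transport commutativity of an outer square to commutativity of the inner one. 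A secondary, purely organizational, point is choosing the orientation: one must match which of the two lemmas in Section~\ref{S:conn-homs} applies, since the roles of ``front/bottom/right'' versus ``top/back/left'' are dictated by which arrows happen to be epi versus mono in the tensored cube; I would resolve this by writing out the cube once and reading off the epi/mono pattern, rather than guessing.
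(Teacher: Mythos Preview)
Your overall strategy---tensor the relevant short exact sequences into spatial diagrams, invoke the lemmas of Section~\ref{S:conn-homs}, and let the sign $(-1)^i$ in $\omega^i_1 := (-1)^i\kappa^i_1$ absorb the anticommutativity---is exactly the paper's. But your execution plan has a genuine gap: a \emph{single} cube cannot contain all four maps of the square, and neither lemma in Section~\ref{S:conn-homs} compares two composites directly. Trace the sources: $\Delta_1$ (left vertical) lives in the grid (syzygy of $\Omega A$) $\otimes$ (cosyzygy sequence of $B''$); $\Delta_2$ (right vertical) in (syzygy of $\Omega A$) $\otimes$ (cosyzygy sequence of $\Sigma B'$); $\kappa^1_1$ in $\Omega A \otimes \eqref{horizontal1}$; and $\kappa^2_1$ in $\Omega^2 A \otimes$ (\eqref{horizontal1} shifted up one step). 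If you tensor $0 \to \Omega^2 A \to P_1 \to \Omega A \to 0$ with the unshifted \eqref{horizontal1}, the resulting cube carries $\Delta_1$ on its front face and $\kappa^1_1$ on its bottom face, but its back face gives $\Delta_1$ for $B'$ (not $\Delta_2$), and its top face gives a map landing in $\Omega^2 A \ot \Sigma B'$ (not $\Omega^2 A \ot \Sigma^2 B'$). So $\Delta_2$ and $\kappa^2_1$ are simply absent, and your step~(4) cannot be carried out.

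The paper builds \emph{two} cubes and uses \emph{both} lemmas. The first cube is $0 \to \Omega^2 A \to P_1 \to \Omega A \to 0$ tensored with~\eqref{horizontal1}; Lemma~\ref{L:down-horizontal} applied there gives $\omega^1_1\Delta_1 = {}$(connecting homomorphism on the right-hand face). The second cube is the same syzygy sequence tensored with~\eqref{horizontal1} shifted up one step; Lemma~\ref{L:horizontal-down} applied there gives $\Delta_2\omega^2_1 = {}$(connecting homomorphism on the left-hand face). The point is that the right face of the first cube and the left face of the second cube coincide---both are the syzygy sequence tensored with $0 \to \Sigma B' \to \Sigma B \to \Sigma B'' \to 0$---so the two face connecting homomorphisms agree, and the square commutes. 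Your ``main obstacle'' paragraph about corestrictions is well taken and does need attention, but it is secondary; the missing idea is the two-cube gluing.
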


\begin{proof}
Tensoring the commutative diagram~(\ref{horizontal1}) with the exact sequence 
\[
0 \longrightarrow \Omega^{2}A \longrightarrow P_{1} 
\longrightarrow \Omega^{1}A \longrightarrow 0,
\]
where $P_{1}$ is a projective module, we have a spatial commutative diagram satisfying all conditions of Lemma~\ref{L:down-horizontal}. In that diagram, the connecting homomorphism on the front face equals $\Delta_{1}$, and the connecting homomorphism on the bottom face equals 
$\kappa^{1}_{1} = - \omega^{1}_{1}$. By the lemma, the composition 
$\omega^{1}_{1} \Delta_{1}$ equals the connecting homomorphism on the right-hand vertical face. 

Now we shift all indices in the diagram~(\ref{horizontal1}) one step up and again tensor it  with the above short exact sequence. The resulting spatial diagram satisfies all conditions of 
Lemma~\ref{L:horizontal-down}. In that diagram, the connecting homomorphism on the top is $\kappa^{2}_{1} = \omega^{2}_{1}$, and the connecting homomorphism on the back equals 
$\Delta_{2}$. By the lemma, the composition $\Delta_{2} \omega^{2}_{1}$ equals the connecting homomorphism
on the left-hand vertical face. Since that face coincides with the right-hand vertical face of the former diagram, we have $\omega^{1}_{1} \Delta_{1} = \Delta_{2} \omega^{2}_{1}$.
\end{proof}

Applying the foregoing lemma repeatedly and passing to the limit, we have a homomorphism $\omega_{1} : \T_{1}(A, B'') \longrightarrow \T_{0}(A, B')$. 
As we observed before, the same construction yields $\omega_{n} : \T_{n}(A, B'') \longrightarrow \T_{n-1}(A, B')$ for all integers $n$.

\begin{theorem}\label{T:first-conn-sec}
The pair  $(\T_\bullet(A,\blank), \omega_{\bullet})$,  is a connected sequence of functors.
\end{theorem}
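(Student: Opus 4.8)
The plan is to verify the two defining properties of a connected sequence of functors for the pair $(\T_\bullet(A,\blank), \omega_\bullet)$: first, that each $\omega_n$ is natural in the short exact sequence $0 \to B' \to B \to B'' \to 0$ (i.e., a morphism of short exact sequences induces a commuting ladder), and second, that the appropriate composition $\T_n(A,B'') \overset{\omega_n}\to \T_{n-1}(A,B') \to \T_{n-1}(A,B)$ (or the relevant two-term composite read off from the long sequence axioms) vanishes — more precisely, that the three-term sequence at each spot is of order two. Since the $\T_n$ were already shown to be additive functors and the $\omega_n$ have just been constructed as limits of the maps $\omega_1^i$, most of the work is bookkeeping about limits.

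First I would reduce everything, via the dimension-shift isomorphisms of Lemma~\ref{L:dim-shift}, to the case $n = 1$, exactly as was done in constructing $\omega_1$ itself. Then naturality: given a morphism of short exact sequences of left $\Lambda$-modules, the horse-shoe construction~\eqref{horizontal1} can be performed compatibly (the split middle columns can be chosen functorially up to the relevant homotopy), and tensoring with the fixed sequence $0 \to \Omega^{i+1}A \to P_i \to \Omega^i A \to 0$ produces a morphism of the spatial diagrams defining the $\kappa_1^i$. The naturality of the snake-lemma connecting homomorphism then gives that each square built from $\kappa_1^i$ and the induced vertical maps commutes; passing to the inverse limit $\varprojlim$ (which is a functor) yields naturality of $\omega_1$, hence of all $\omega_n$ by Lemma~\ref{L:dim-shift}. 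The sign twist $\omega_1^i = (-1)^i \kappa_1^i$ is harmless here since it is applied uniformly.

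Next, the order-two (``complex'') condition. Using Theorem~\ref{T:second}, I would compute $\T_\bullet(A,\blank)$ via the $\Tor_1$ system, where the structure maps are genuine connecting homomorphisms in long exact $\Tor$-sequences and the $\kappa$-maps likewise fit into long exact sequences coming from tensoring the horse-shoe diagram. At each finite stage the relevant composite is the composition of two consecutive maps in an honest long exact homology sequence (or adjacent pieces glued as in the two lemmas of Section~\ref{S:conn-homs}), hence is zero; exactness at the finite stages is stronger than what is needed, but after applying $\varprojlim$ one only retains that the composite is zero, which is all that a connected (as opposed to exact) sequence requires — precisely the caveat flagged in the footnote about $\varinjlim$ versus $\varprojlim$. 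One must check that the identifications made in Theorem~\ref{T:second} carry the $\omega$'s to the connecting maps in the $\Tor$ picture; this follows from Lemma~\ref{L:gamma} and the compatibility already recorded in Theorem~\ref{T:Jcompdir}, together with the anticommutation lemmas (Lemmas~\ref{L:down-horizontal} and~\ref{L:horizontal-down}) that were used to pin down the sign in the definition of $\omega_1^i$.

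The main obstacle I expect is not any single hard step but the coherent handling of the inverse limits: one must be sure that the finite-stage morphisms of diagrams are compatible with the structure maps of the two pro-systems (the one for $\T_n(A,B'')$ and the one for $\T_{n-1}(A,B')$), so that they actually assemble into a morphism of pro-objects and not merely a family of unrelated maps. This is exactly the content of the preceding lemma (the anticommuting-square lemma) combined with its analogue for the naturality squares, and once those compatibilities are in hand, $\varprojlim$ does the rest formally. A secondary but routine nuisance is checking that the horse-shoe resolution and the auxiliary embedding $\Sigma B' \hookrightarrow {I^1}'$ can be chosen so that the whole tower of spatial diagrams commutes on the nose; any two such choices are canonically isomorphic by Proposition~\ref{P:any-two-choices}, so this does not affect the limit.
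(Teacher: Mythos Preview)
Your naturality argument matches the paper's: both appeal to the functoriality of the snake-lemma connecting homomorphism at each finite stage of~\eqref{connect11}, and pass to the limit.

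For the order-two condition, however, you take a detour that the paper avoids. The paper works entirely within the first construction: at each finite stage~$i$, the diagram~\eqref{connect11} (with $\Omega A$ replaced by $\Omega^i A$ and the cosyzygies shifted) is a snake diagram, and the snake lemma directly yields a six-term sequence in which the composites $\T_{n-1}(A,\alpha)\circ\omega_n$ and $\omega_n\circ\T_n(A,\beta)$ appear as consecutive maps, hence vanish. A short argument then checks that these vanishings survive the factorization through the injective stabilizations (since $\Omega^i A\ \ot\ \Sigma^i B'$ embeds in $\Omega^i A\otimes\Sigma^i B'$). Passing to the limit is immediate. Your route instead transfers everything to the $\Tor$ picture via Theorem~\ref{T:second} and then must verify that the isomorphism carries the $\omega$'s to the $\Tor$ connecting maps. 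That compatibility is not contained in Theorem~\ref{T:second} or Theorem~\ref{T:Jcompdir} (which only match the \emph{structure maps}~$\Delta_i$, not the connecting homomorphisms~$\omega$); it is precisely the content of the three anticommuting-square lemmas in Section~\ref{S:asymp-conn}.2, which in the paper come \emph{after} the theorem you are proving and constitute the second construction. So your approach is not wrong, but it front-loads Section~5.2's work into the proof of a theorem that the paper dispatches in four lines using only~\eqref{connect11}.
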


\begin{proof}
 We already remarked that $\T$ is an additive functor. Therefore, given an exact sequence of left $\Lambda$-modules $0 \longrightarrow B' \overset{\alpha}{\longrightarrow} B \overset{\beta}{\longrightarrow} B'' \longrightarrow 0$, 
the composition  
 \[
 \T_{n}(A,B') \overset{\T_{n}(A, \alpha)}{\longrightarrow} \T_{n}(A,B) \overset{\T_{n}(A,\beta)}{\longrightarrow}  \T_{n}(A,B'').
 \]
 of the induced maps is zero. The fact that 
 $\T_{n-1}(A,\alpha) \circ \omega_{n} = 0$ follows from the snake lemma applied to the diagram~\eqref{connect11}. For the same reason,  $\omega_{n} \circ \T_{n}(A,\beta) = 0$. Thus it remains to show that the $\omega_{n}$ are functorial. But this follows from the functoriality of the connecting homomorphism in the snake lemma applied to the diagram~\eqref{connect11}. 
%\fbox{Check if functoriality $\Rightarrow$ a complex: Buchsbaum}
\end{proof}

\section{The asymptotic stabilization: the second construction}\label{S:sec-const}

Next we want to show that the asymptotic stabilization $\T_{\bullet}(A,B)$ can  be computed via the $\Tor$ functors, completely bypassing the need to use the injective stabilization of the tensor product. This approach uses non-functorial tools but has the advantage of being formulated in terms of familiar operations. It will also offer an intuitive perspective on Proposition~\ref{P:flat-dim}. For that purpose, we are going to construct a commutative diagram 
\begin{equation*}
\xymatrix@C5pt
	{
	\ldots \ar[rr]
	&
	&\Tor_{1}(\Omega A, \Sigma^{2} B) \ar[rr]  \ar@{->>}[rd]
	&
	& \Tor_{1}(A, \Sigma B) \ar@{->>}[rd] \ar[rr]
	&
	& A \otimes B
\\
	& \ldots \ar[rr] \ar@{>->}[ru]
	&
	& \Omega A \ot \Sigma B \ar[rr] \ar@{>->}[ru]
	&
	& A \ot B \ar@{>->}[ru]
	}
\end{equation*}
where the bottom sequence is given by~\eqref{Eq:directed}, and the arrows in the top sequence are connecting homomorphisms. Clearly, once such a diagram has been constructed, the limits of the two horizontal sequences will be isomorphic, showing that the asymptotic stabilization can indeed be constructed using the $\Tor$ functors. Moreover, we shall also show that all northeast arrows are monic and all southeast arrows are epic. Thus all stages in the first construction of the asymptotic stabilization can be recovered via the epi-mono factorizations of the top arrows.

\subsection{The construction}

This construction requires explicit choices, so for a right $\Lambda$-module $A$ we choose a projective resolution $P_{\dotr} \lra A$ and use the definition of $\Tor_{1}(A, \blank)$ as the first left satellite of the tensor product, i.e., via the exact sequence 
\begin{equation}\label{Tor-def}
 0 \longrightarrow \Tor_{1}(A, \blank) \longrightarrow \Omega A \otimes \blank \longrightarrow P_{0} \otimes \blank \longrightarrow A \otimes \blank \longrightarrow 0.
\end{equation}
For a projective resolution of $\Omega A$ we choose the projective resolution of $A$ truncated in degree 1. This allows us to claim that 
$\Tor_{i+1}(A, \blank) = \Tor_{i}(\Omega A, \blank)$, where we do mean an equality rather than an abstract isomorphism. 

For a short exact sequence 
$0 \longrightarrow  C \longrightarrow D \longrightarrow E \to 0$ of left $A$-modules, recall the construction of the connecting homomorphisms $\Tor_{i+1}(A, E) \lra \Tor_{i}(A, C)$ in the corresponding long exact sequence of the $\Tor$ functors. The case $i = 0$ consists of evaluating the sequence~\eqref{Tor-def} on the short exact sequence above and then using the snake lemma. For positive values of $i$, we describe the construction when $i = 1$ and then use the dimension shift. To this end, we replace $A$ with $\Omega A$ and build a snake diagram as in the case $i = 0$. The new diagram and the original one have a common row,
$\Omega A \otimes C \longrightarrow \Omega A \otimes D \longrightarrow \Omega A \otimes E \longrightarrow 0$,
which allows to glue the two diagrams together:
\small
\begin{equation*}\label{glue}
\xymatrix@R20pt@C1pt
	{
	&
	&
	& \Tor_{1}(\Omega A, C) \ar[ldd] \ar[rr]
	&
	& \Tor_{1}(\Omega A, D) \ar[ldd] \ar[rr]
	&
	& \Tor_{1}(\Omega A, E) \ar[ldd]
	&
\\
	&
	& 
	&
	& 
	&
	& 
	&
	&	
\\
	&
	& \Omega^{2}A \otimes C \ar[ldd] \ar[rr]
	& 
	& \Omega^{2}A \otimes D \ar[ldd] \ar[rr]
	& 
	& \Omega^{2}A \otimes E \ar[ldd]
	& 
	&
\\
	&
	& 
	&
	& 
	&
	& 
	&
	&	
\\
	& P_{1} \otimes C \ar[ldd] \ar[rr]
	&
	& P_{1} \otimes D \ar[ldd] \ar[rr]
	&
	& P_{1} \otimes E \ar[ldd]
	&
	& 
	&
\\
	\Tor_{1}(A, C) \ar[rr] \ar@{>->}[d]
	&
	& \Tor_{1}( A, D) \ar[rr] \ar@{>->}[d]
	&
	& \Tor_{1}(A, E) \ar@{>->}[d]
	&
	&
	&
	&
\\
	\Omega A \otimes C \ar[rr]^{\gamma} \ar[d]_{\alpha} \ar@{} 	[drr] |{T}
	&
	& \Omega A \otimes D \ar@{->>}[rr] \ar[d]^{\delta}
	&
	& \Omega A \otimes E \ar[d]
	&
	& 
	&
	&
\\
	P_{0} \otimes C \ar@{>->}[rr]_{\beta} \ar[d]
	&
	& P_{0} \otimes D \ar@{->>}[rr] \ar[d]
	&
	& P_{0} \otimes E \ar[d]
	&
	& 
	&
	&
\\
	A \otimes C \ar[rr]
	&
	& A \otimes D \ar@{->>}[rr]
	&
	& A \otimes E 
	&
	& 
	&
	&
	}	
\end{equation*}
\normalsize
Notice that the connecting homomorphism 
$\Tor_{2}(A, E) = \Tor_{1}(\Omega A, E) \overset{\epsilon}{\longrightarrow} \Omega A \otimes C$ in the horizontal part of the diagram factors through 
$\Ker\alpha =\Tor_{1}(A, C)$. Indeed, the commutativity of the square $T$ shows that $\beta \alpha \epsilon = \delta \gamma \epsilon = 0$. Since $\beta$ is monic, $\alpha \epsilon = 0$ and 
$\epsilon$ factors through $\Tor_{1}(A, C)$. As a result, we have a connecting homomorphism $\Tor_{2}(A, E)  \longrightarrow \Tor_{1}(A, C)$ and the desired long exact sequence.

Returning to the left $\Lambda$-module $B$, we specialize the original short exact sequence to the cosyzygy sequence $0 \longrightarrow \Sigma B \longrightarrow I^{1} \longrightarrow \Sigma^{2} B \longrightarrow 0$. The foregoing argument then yields a commutative square 
\[
\xymatrix
	{
	\Tor_{2}(A, \Sigma^{2} B) \ar[r] \ar@{=}[d] \ar[rd]
	& \Tor_{1}(A, \Sigma B) \ar@{>->}[d]
\\
	\Tor_{1}(\Omega A, \Sigma^{2} B) \ar[r] 
	& \Omega A \otimes \Sigma B
	}
\] 
where the diagonal map is the connecting homomorphism in the horizontal part of the diagram on page~\pageref{glue}. The composition of this map with
$\gamma : \Omega A \otimes \Sigma B \to \Omega A \otimes I^{1}$ is zero,  hence it factors through the kernel of $\gamma$, which is by definition 
$\Omega A\, \ot\, \Sigma B$. We now have a commutative diagram of solid arrows
\[
\xymatrix
	{
	\Tor_{2}(A, \Sigma^{2} B) \ar[rr] \ar@{=}[dd] \ar[rd]
	&
	& \Tor_{1}(A, \Sigma B) \ar@{>->}[dd]
	&
\\
	& \Omega A\, \ot \, \Sigma B \ar[rd] \ar@{>.>}[ur]
	&
	&	
\\
	\Tor_{1}(\Omega A, \Sigma^{2} B) \ar[rr] 
	&
	& \Omega A \otimes \Sigma B \ar[r] \ar[d] \ar@{} [dr] |T
	& \Omega A \otimes I^{1} \ar[d]
\\
	&
	& P_{0} \otimes \Sigma B \ar@{>->}[r]
	& P_{0} \otimes I^{1}
	}
\] 
The existence of the dotted arrow and the fact that it is monic was established when we discussed the diagram~\eqref{D:big}; it makes the triangle on the right commute. Since the vertical map in that triangle is monic, it is easy to see that the top triangle is also commutative. By construction, the  horizontal map in that triangle is the connecting homomorphism 
in the long exact sequence of the functors $\Tor_{i}(A, \blank)$ corresponding to the chosen cosyzygy sequence. Colloquially, the connecting homomorphism in the long exact $\Tor$-sequence factors through the injective stabilization. We view these connecting homomorphisms as the structure maps in the sequence
\[
\ldots \longrightarrow \Tor_{1}(\Omega^{i}A, \Sigma^{i+1} B)
\longrightarrow \Tor_{1}(\Omega^{i-1}A, \Sigma^{i} B) \longrightarrow \ldots
\]

On the other hand, as~\eqref{D:big} showed, the structure map
$\Omega A\, \ot\, \Sigma B \longrightarrow A \ot B$
factors through $ \Tor_{1}(A, \Sigma B)$. Combining these two observations, we have a commutative diagram 
\begin{equation}\label{Eq:intertwine}
\begin{gathered}
\xymatrix@C5pt
	{
	\ldots \ar[rr]
	&
	&\Tor_{1}(\Omega A, \Sigma^{2} B) \ar[rr]^{\Gamma_{2}}  \ar@{->>}[rd]
	&
	& \Tor_{1}(A, \Sigma B) \ar@{->>}[rd] \ar[rr]^{\Gamma_{1}}
	&
	& A \otimes B
\\
	& \ldots \ar[rr]^{\Delta_{2}} \ar@{>->}[ru]
	&
	& \Omega A \ot \Sigma B \ar[rr]^{\Delta_{1}} \ar@{>->}[ru]
	&
	& A \ot B \ar@{>->}[ru]
	}
\end{gathered}
\end{equation}
``intertwining'' the two sequences. Taking into account the dimension shift, we now have

\begin{theorem}\label{T:second}
The sequence of the $\Tor$ functors in the above diagram is functorial in the first argument. For any integer $n$, the two families of parallel arrows in the (suitably shifted) above diagram induce mutually inverse isomorphisms\footnote{The reader has probably noticed that, as promised, this theorem implies Proposition~\ref{P:flat-dim}.}
\begin{align*}
 \T_n(A,\blank)(B)  = \underset{k,k+n\geq 0}{\varprojlim} \Omega^{k+n} A\,\ot\ \Sigma^k B  
 \simeq  \underset{k,k+n\geq 0}{\varprojlim} \Tor_{1} (\Omega^{k+n} A, \Sigma^{k+1} B) 
\end{align*}  
\end{theorem} 

\begin{proof}
 The first assertion follows from the functoriality of the connecting homomorphism. The second assertion has already been established.
\end{proof}

\begin{remark}
 We have actually proved more. Since all southeast maps are epic, all northeast maps are monic, and since an epi-mono factorization of a morphism in an abelian category is determined uniquely up to an isomorphism, the lower sequence is determined uniquely up to an isomorphism by the maps in the upper sequence. In particular, this yields new equivalent definitions of both the injective stabilization and the asymptotic stabilization of the tensor product.
\end{remark}

\begin{remark}
 While the top sequence results in a bifunctorial construction, the terms of that sequence are not functorial in the second variable. This is due to the fact the symbol 
 $\Sigma B$ is only defined up to (injective) stable equivalence, but the Tor functor is not injectively stable and thus fails in general to preserve composition when combined with $\Sigma$.
\end{remark}

%\begin{remark}
% In the above diagram, $A$ and $B$ can be replaced by their arbitrary syzygy and, respectively, cosyzygy modules. With each map in the sequences, the powers of syzygy and cosyzygy modules simultaneously go down by one. If the powers of 
% $\Omega$ run out first, then the last term on the right will be a tensor product. If the powers of $\Sigma$ run out first, then the last term will be a $\Tor_{1}$. 
%\end{remark}

Returning to the diagram~\eqref{D:big} and using Lemma~\ref {L:gamma}, we have

\begin{lemma}\label{L:delta-epi}
The diagram~\eqref{D:big} gives rise to an exact sequence 
\[
\Tor_1(A, B) \lra \Tor_1(A, I^{0}) \lra \Tor_1(A,\Sigma B) \overset{\delta}\lra A \ot B \lra 0,
 \]
where $\delta$ is (the corestriction of) the connecting homomorphism
from the snake lemma. If the injective $I^{0}$ is flat, then $\delta$ is an isomorphism. \qed
\end{lemma}

\begin{example}\label{E:qF}
 Suppose that $\Lambda$  is quasi-Frobenius or, more generally, left IF (i.e., each injective left module is flat). Then, by Lemma~\ref{L:delta-epi}, the southeast maps are all isomorphisms, making the two systems isomorphic. The next example shows that the two systems may be isomorphic over other types of rings.
\end{example}

\begin{example}
Let $\Lambda := \Z$, $A : = \Z/p\Z$, where $p$ is a prime number, and $B := \Z$. Then, taking $\Sigma B \simeq \Q/\Z$, we have, since the injective stabilization vanishes on injectives (\cite[Lemma~4.5]{MR-1}),
%Lemma~\ref{L:vanish-inj} 
$\Omega A\, \ot\, \Sigma B = 0$.\footnote{Alternatively, since $\Omega A$ is projective, one can use~\cite[Lemma~4.8]{MR-1}).}
%Lemma~\ref{L:mono}. 
To compute $A\, \ot\, B$, we apply the functor $\Z/p\Z \otimes \blank$ to the injective envelope $\Z \to \Q$ of 
$\Z$. The kernel of the resulting map $\Z/p\Z \to \Z/p\Z \otimes \Q$ is $\Z/p\Z$. Finally, we compute $\Tor_{1}(A, \Sigma B)$ by using a projective resolution of $A = \Z/p\Z$. The result is the kernel of the map $\Q/\Z \overset{.p}{\longrightarrow} \Q/\Z$, which is the subgroup $\Z/p\Z = \{0, 1/p, \ldots , (p-1)/p\}$. Moreover, the diagram~\eqref{D:big} shows that, in this case, the map $\Tor_{1}(A, \Sigma B) \to A\, \ot\, B$ is an isomorphism. Since all the remaining terms in the two directed systems vanish, we have that the southeast maps in~\eqref{Eq:intertwine} make the two systems isomorphic. 
\end{example}

\subsection{The connectedness property}

In this subsection we want to show that the second construction of the asymptotic stabilization also produces a connected sequence of functors. This is certainly true since the first and the second constructions produce, as we just saw, isomorphic results.  But we shall show that the second construction has a connecting homomorphism of its own and that it is compatible with the one from the first construction.\footnote{Said differently, under the identification of the two constructions, the connecting homomorphism from the first construction will be identified with the intrinsically defined connecting homomorphism from the second construction.}  

We continue to work with the right $\Lambda$-module $A$ and the short exact sequence 
\[
 0 \longrightarrow B' \longrightarrow B \longrightarrow B'' \longrightarrow 0
\]
of left $\Lambda$-modules.  The approach we are about to describe will make use of the functorial doubly infinite exact sequence~\cite[(9.1)]{MR-1}
%\eqref{long}. 
\begin{equation}\label{Eq:tor-harpoon}
 \begin{gathered}
 \xymatrix@R10pt
	{
	\ldots \ar[r]
	& \Tor_{1}(A,B') \ar[r]
	& \Tor_{1}(A,B) \ar[r]
	& \Tor_{1}(A,B'')
	&
\\
	\ar[r]
	& A \ot B' \ar[r]
	& A \ot B \ar[r]
	& A \ot B''
	&
\\
	\ar[r]
	& A \ot \Sigma B' \ar[r]
	& A \ot \Sigma B \ar[r]
	& A \ot \Sigma B'' \ar[r]
	& \ldots
	}
\end{gathered}
\end{equation}
In this sequence, each injective stabilization is part of a functorial directed sequence; this was established in Proposition~\ref{P:functorial}. In particular, 
%
%The structure maps $\Delta_{i}$ in each of those sequences were built using the diagram~\eqref{D:big}. Because this diagram is functorial in the second argument, so is the corresponding connecting homomorphism from the snake lemma. Consequently, 
%
the structure maps $\Delta_{i}$ are functorial in the second argument. This implies that  \texttt{each row of the injective stabilizations} 
in~\eqref{Eq:tor-harpoon}
%the 
%sequence~\cite[(9.1)]{MR-1}
%~\eqref{long} 
gives rise to morphisms of the corresponding directed systems. 

%At each step in these directed systems, we have the corresponding long exact sequence. The verification that the squares built from the structure maps and the maps in the long exact sequences commute is straightforward. As a result, we have maps of directed systems, although so far only over the injective stabilizations.  

Now we want to build structure maps for each of the Tor terms 
in~\eqref{Eq:tor-harpoon}
%~\cite[(9.1)]{MR-1}.
%~\eqref{long}. 
In fact, those maps have already been built in the second construction of the asymptotic stabilization, see the diagram~\eqref{Eq:intertwine} and the diagram on page~\pageref{glue}, where $B$ (respectively, $B'$, $B''$) should be replaced by $\Sigma B$ (respectively, $\Sigma B'$, $\Sigma B''$). As Theorem~\ref{T:second} shows, the resulting system is functorial in each argument. Therefore, \texttt{each row of the Tor functors} in~\eqref{Eq:tor-harpoon}
%~\cite[(9.1)]{MR-1}
%~\eqref{long} 
gives rise to morphisms of the corresponding directed systems.

%we construct another copy of the diagram~\eqref{D:big}, where $A$ is replaced by $\Omega A$, and concatenate the two copies along the common edge. Since tensoring with a projective preservers monomorphisms, we have that the connecting homomorphism in the new diagram yields a map on the corresponding Tor functors. 
%Iterating this procedure, we have directed sequences for the Tor terms. Moreover, at each step in the directed systems we have a long exact sequence of the Tor functors. The verification that 
%the squares built from the structure maps and the maps in the long exact sequences commute is straightforward. As a result, we have maps of directed systems over Tor functors. 

Now we claim that the term-wise maps between the directed systems constitute morphisms of those systems. The foregoing discussion shows that we only have to check the commutativity of the squares lying over the connecting homomorphisms in~\eqref{Eq:tor-harpoon}.
%in the sequence~\cite[(9.1)]{MR-1}.
%~\eqref{long}. 
There are three types of such homomorphisms: between two copies of $\Tor$, between $\Tor$ and the injective stabilization, and between two copies of the injective stabilization. It is clear that we only have to check one square of each type. We first examine the square(s) connecting $\Tor_{1}$ and the corresponding injective stabilization. 

\begin{lemma}\label{L:tor-stab}
The square
 \[
\xymatrix
	{
	\Tor_{1}(\Omega A, \Sigma B'') \ar[d]_{\Gamma_{2}} \ar[r]^-{h}
	& \Omega A \ot \Sigma B' \ar[d]^{\Delta_{1}}
\\
	\Tor_{1}(A, B'') \ar[r]^-{g}
	& A \ot B'
	}
\] 
anti-commutes. Here $\Gamma_{2}$ is from~\eqref{Eq:intertwine} (with $\Sigma B$ replaced by $B''$),  $\Delta_{1}$ is from~\eqref{Eq:directed} (with $B$ replaced by $B'$), and the horizontal maps are the connecting homomorphisms from~\eqref{Eq:tor-harpoon}.
\end{lemma}

\begin{proof}
In terms of the commutative 3D diagram below, $\Gamma_{2}$ is determined by the connecting homomorphism in the front face, with domain a subgroup of the framed term. Similarly, $h$ is determined by the connecting homomorphism in the right-hand face of the cube, with its image being a subgroup of the framed node in the back.
%
%The coinitial maps $\Gamma_{2}$ and $h$ are both determined by connecting homomorphisms in the following commutative 3D diagram 
\[
\resizebox{1.0\textwidth}{!}{
\xymatrix@R25pt@C35pt
	{
	&
	& \Omega^{2} A \otimes B' \ar[rr] \ar[d] \ar[lddd]
	&
	& \Omega^{2} A \otimes I^{0'} \ar@{->>}[rr] \ar[d] \ar[lddd]
	&
	& \Omega^{2} A \otimes \Sigma B' \ar[d] \ar[lddd]
\\
	&
	& P_{1} \otimes B' 
	\ar[rr] |! {[urr];[ddr]}\hole \ar@{->>}[d] 		
	\ar[lddd] |! {[ddl];[ddr]}\hole
	&
	& P_{1} \otimes I^{0'} 
	\ar@{->>}[rr] |!{[urr];[ddr]}\hole \ar@{->>}[d] 
	\ar[lddd] |! {[ddl];[ddr]}\hole
	&
	& P_{1} \otimes \Sigma B' 
	\ar@[blue]@{->>}[d] 				
	\ar@[blue]@{>->}[lddd]
\\
	&
	& \Omega A \otimes B' 
	\ar[rr] |! {[uurr];[dr]}\hole |!{[urr];[ddr]}\hole 
	\ar[lddd] |! {[dl];[dr]} \hole |! {[ddl];[ddr]} \hole  
	&
	& \Omega A \otimes I^{0'} 
	\ar@[magenta]@{->>}[rr] |! {[uurr];[dr]}\hole |!{[urr];[ddr]} \hole 
	\ar@[magenta]@{>->}[lddd] |!{[dl];[dr]}\hole |!{[ddl];[ddr]} \hole
	&
	& *+[F]{\Omega A \otimes \Sigma B'} \ar[lddd]
\\
	& \Omega^{2} A \otimes B \ar[rr]  \ar[d] \ar@{->>}[lddd]
	&
	& \Omega^{2} A \otimes I^{0} 
	\ar@{->>}[rr] \ar[d] \ar@{->>}[lddd]
	&
	& \Omega^{2} A \otimes \Sigma B \ar@[blue][d] 			\ar@[blue]@{->>}[lddd]
	&
\\
	& P_{1} \otimes B 
	\ar[rr] |! {[urr];[ddr]}\hole \ar@{->>}[d] 
	\ar@{->>}[lddd] |! {[ddl];[ddr]}\hole
	&
	& P_{1} \otimes I^{0} 
	\ar@{->>}[rr] |! {[urr];[ddr]}\hole \ar@{->>}[d] 
	\ar@{->>}[lddd] |! {[ddl];[ddr]}\hole
	&
	& P_{1} \otimes \Sigma B \ar@{->>}[d] \ar@{->>}[lddd]
	&
\\
	& \Omega A \otimes B 
	\ar@[magenta][rr] |! {[uurr];[dr]}\hole |! {[urr];[ddr]}\hole 
	\ar@[magenta]@{->>}[lddd] |!{[dl];[dr]}\hole |! {[ddl];[ddr]}\hole
	&
	&  \Omega A \otimes I^{0} 
	\ar@{->>}[rr] |! {[uurr];[dr]}\hole  |! {[urr];[ddr]}\hole 
	\ar@{->>}[lddd] |! {[dl];[dr]} \hole |! {[ddl];[ddr]}\hole
	&
	& \Omega A \otimes \Sigma B \ar@{->>}[lddd]
	&
\\
	\Omega^{2} A \otimes B'' \ar[rr] \ar[d]
	&
	& \Omega^{2} A \otimes I^{0''} \ar@[red]@{->>}[rr] \ar@[red]	[d]
	&
	& *+[F]{\Omega^{2} A \otimes \Sigma B''} \ar[d]
	&
	&
\\
	P_{1} \otimes B'' \ar@[red]@{>->}[rr]  \ar@[red]@{->>}[d]
	&
	& P_{1} \otimes I^{0''}  \ar@{->>}[rr] \ar@{->>}[d]
	&
	& P_{1} \otimes \Sigma B'' \ar@{->>}[d]
	&
	&
\\
	 \Omega A \otimes B'' \ar[rr] 
	&
	& \Omega A \otimes I^{0''} \ar@{->>}[rr] 
	&
	& \Omega A \otimes \Sigma B''
	&
	&
	}
	}
\] 
%The map $f$ is determined by the front face and starts from inside the framed term, and $h$ is determined by the right-hand face of the cube, with its image inside the framed node in the back.

The next commutative diagram shows that $g$ is determined by the connecting homomorphism in the left-hand face and starts from inside the framed term. It also shows that $\Delta_{1}$ is determined by the connecting homomorphism in the back face of the cube, with its image inside the framed term in the back.
 
%Let us now describe the coterminal maps $g$ and $\Delta_{1}$. These maps are both connecting homomorphisms in the following commutative 3D diagram 
 \[
 \resizebox{1.0\textwidth}{!}{ 
\xymatrix@R25pt@C35pt
	{
	&
	& \Omega A \otimes B' \ar[rr] \ar[d] \ar[lddd]
	&
	& \Omega A \otimes I^{0'} \ar@[purple]@{->>}[rr] 
	\ar@[red][d] 
	\ar@[magenta]@{>->}[lddd] 
	&
	& \Omega A \otimes \Sigma B' \ar[d] \ar[lddd] 
\\
	&
	& P_{0} \otimes B' \ar@[red]@{>->}[rr] |! {[urr];[ddr]}\hole 
	\ar@[purple]@{->>}[d] 
	\ar@[blue]@{>->}[lddd] |! {[ddl];[ddr]}\hole
	&
	& P_{0} \otimes I^{0'} \ar@{->>}[rr]  |!{[urr];[ddr]}\hole 
	\ar@{->>}[d] 
	\ar@{>->}[lddd] |! {[ddl];[ddr]}\hole
	&
	& P_{0} \otimes \Sigma B' \ar@{->>}[d] \ar[lddd]
\\
	&
	& *+[F]{A \otimes B'} 
	\ar[rr] |! {[uurr];[dr]}\hole |!{[urr];[ddr]}\hole 
	\ar[lddd] |! {[dl];[dr]} \hole |! {[ddl];[ddr]} \hole  
	&
	& A \otimes I^{0'} 
	\ar@{->>}[rr] |! {[uurr];[dr]}\hole |!{[urr]; [ddr]} \hole 
	\ar[lddd] |! {[dl];[dr]} \hole |! {[ddl];[ddr]} \hole
	&
	& A \otimes \Sigma B' \ar[lddd]
\\
	& \Omega A \otimes B \ar@[magenta][rr] 
	\ar@[blue][d]  
	\ar@[purple]@{->>}[lddd]  
	&
	& \Omega A \otimes I^{0} \ar@{->>}[rr] \ar[d]  
	\ar@{->>}[lddd]
	&
	& \Omega A \otimes \Sigma B \ar[d] \ar@{->>}[lddd]
	&
\\
	& P_{0} \otimes B \ar@{>->}[rr] |! {[urr];[ddr]}\hole 
	\ar@{->>}[d] 
	\ar@{->>}[lddd] |! {[ddl];[ddr]}\hole
	&
	& P_{0} \otimes I^{0} \ar@{->>}[rr] |! {[urr];[ddr]}\hole 
	\ar@{->>}[d] 
	\ar@{->>}[lddd]  |! {[ddl];[ddr]}\hole
	&
	& P_{0} \otimes \Sigma B \ar@{->>}[d] \ar@{->>}[lddd]
	&
\\
	& A \otimes B \ar[rr] |! {[uurr];[dr]}\hole |! {[urr];[ddr]}\hole 
	\ar@{->>}[lddd] |! {[dl];[dr]} \hole |! {[ddl];[ddr]}\hole
	&
	& A \otimes I^{0} 
	\ar@{->>}[rr] |! {[uurr];[dr]}\hole  |! {[urr];[ddr]}\hole 
	\ar@{->>}[lddd] |! {[dl];[dr]} \hole |! {[ddl];[ddr]}\hole
	&
	& A \otimes \Sigma B \ar@{->>}[lddd]
	&
\\
	*+[F]{\Omega A \otimes B''} \ar[rr]  \ar[d] 
	&
	& \Omega A \otimes I^{0''} \ar@{->>}[rr] \ar[d]
	&
	& \Omega A \otimes \Sigma B'' \ar[d]
	&
	&
\\
	P_{0} \otimes B'' \ar[rr]  \ar@{->>}[d]
	&
	& P_{0} \otimes I^{0''} \ar@{->>}[rr] \ar@{->>}[d]
	&
	& P_{0} \otimes \Sigma B'' \ar@{->>}[d]
	&
	&
\\
	A \otimes B'' \ar[rr]
	&
	& A \otimes I^{0''} \ar@{->>}[rr] 
	&
	& A \otimes \Sigma B''
	&
	&
	}
	}
\] 
 
%The map $g$ is determined by the left-hand face and starts from inside the framed term, and $\Delta_{1}$ is determined by the back face of the cube, with its image inside the framed term in the back.

One can easily check that the first cube satisfies the conditions of Lemma~\ref{L:down-horizontal}, and the second cube satisfies the conditions of Lemma~\ref{L:horizontal-down}. However, we cannot immediately apply these results because neither $g\Gamma_{2}$ nor $\Delta_{1}h$ is contained in a single cube. To bypass this obstacle, notice that the bottom face of the first cube coincides with the top face of the second cube. Let $\delta$ be the connecting homomorphism in this common face (it starts inside $\Omega A \otimes B''$ and ends inside $\Omega A \otimes \Sigma B'$). By Lemma~\ref{L:down-horizontal}, $-h = \delta \Gamma_{2}$ and, by Lemma~\ref{L:horizontal-down}, $g = \Delta_{1} \delta$. Therefore $g\Gamma_{2} = - \Delta_{1}h$, as claimed. 
\end{proof}

Now we look at the square(s) connecting two consecutive Tor functors.

\begin{lemma}
 In the above notation\footnote{To keep the notation simple, the same subscripted $\Gamma$'s denote the structure maps relative to $B'$ and $B''$.}, the square
 \[
\xymatrix
	{
	\Tor_{2}(\Omega A, \Sigma B'') \ar[d]_{\Gamma_{3}} \ar[r]^-{h}
	& \Tor_{1}(\Omega A, \Sigma B') \ar[d]^{\Gamma_{2}}
\\
	\Tor_{2}(A, B'') \ar[r]^-{g}
	& \Tor_{1}(A, B')
	}
\] 
anti-commutes.
\end{lemma}

\begin{proof}
 The argument in this case is identical to that of the previous lemma, except that, in the two cubes, $A$ has to be replaced by 
 $\Omega A$. The details are left to the reader.
\end{proof}

Finally, we examine the square(s) connecting two consecutive shifts of the injective stabilizations.

\begin{lemma}
 In the above notation\footnote{To keep the notation simple, we ``recycle'' the symbol $\Delta_{1}$.}, the square
 \[
\xymatrix
	{
	\Omega A \ot \Sigma B'' \ar[d]_{\Delta_{1}} \ar[r]^-{h}
	& \Omega A \ot \Sigma^{2} B' \ar[d]^{\Delta_{1}}
\\
	A \ot B'' \ar[r]^-{g}
	& A \ot \Sigma B'
	}
\] 
anti-commutes.
\end{lemma}

\begin{proof}
 The argument is similar to those in the previous two lemmas. To describe the composition $g\Delta_{1}$ we use the first cube from the proof of Lemma~\ref{L:tor-stab}, where we lower the index of each copy of $\Omega$ by one. Let $\delta$ be the connecting homomorphism in the right-hand face of that cube. By Lemma~\ref{L:down-horizontal}, $g\Delta_{1} = - \delta$. 
 
To describe the composition $\Delta_{1}h$ we use the second cube from the proof of Lemma~\ref{L:tor-stab}, where we raise the index of each copy of $\Sigma$ by one (in particular, $B$ becomes 
 $\Sigma B$, etc.). The left-hand face of this cube coincides with the right-hand face of the previous cube, so they share the connecting homomorphism $\delta$. By Lemma~\ref{L:horizontal-down}, $\delta = \Delta_{1}h$, and therefore $g\Delta_{1} = - \Delta_{1}h$, as claimed.
\end{proof}

The just proved results show that, to obtain morphisms of the directed systems, we have to offset the sign in the squares that contain connecting homomorphisms. For example, we can leave the vertical directed systems unchanged, but modify the horizontal long exact sequences by introducing an alternating (in the vertical direction) sign for the connecting homomorphisms. In summary, we have

\begin{theorem}
 The pair  $(\T_\bullet(A,\blank), \rho_{\bullet})$, where 
 $\rho_{\bullet}$ is the limit of the connecting homomorphisms modified as above, is a connected sequence of functors. It is isomorphic to the connected sequence $(\T_\bullet(A,\blank), \omega_{\bullet})$ of Theorem~\ref{T:first-conn-sec}.
\end{theorem}
\begin{proof}
 The first assertion has just been proved. The verification of the second is straightforward and is left to the reader.
\end{proof}

\section{The asymptotic stabilization: the third construction}\label{S:third-const}

The goal of this section is to establish an isomorphism between the asymptotic stabilization of the tensor product and the $J$-completion of the univariate $\Tor$ functor.

\subsection{Stabilization via satellites}
The functor isomorphism $S^{1}\Tor_{1}(A, \blank) \cong A \ot \blank$ \cite[Proposition~9.3]{MR-1}
% Proposition~\ref{P:via-Tor} 
 ($S^{1}$ denotes the \textcolor{purple}{right} satellite) 
 %is the initial term of the sequence~\eqref{Eq:directed}:
%\[
%\xymatrix
%	{
%	\ldots \ar[r] 
%	& \Omega^2 A\ \ot\ \Sigma^2 B  \ar[r]^{\Delta_2} 
%	& \Omega A\ \ot\ \Sigma B \ar[r]^{\Delta_1} 
%	& A\ \ot\ B.
%	}
%\]
%This 
suggests a stabilization sequence of the form
\[
\Delta_{i} : S^{1}\Tor_{1}(\Omega^{i+1}A, \blank) \circ \Sigma^{i+1} \longrightarrow
S^{1}\Tor_{1}(\Omega^{i}A, \blank) \circ \Sigma^{i}.
\]
Clearly, it suffices to do this for $i=0$, and we shall again use the 
diagram~\eqref{D:big}. This yields a commutative diagram of solid arrows
\begin{equation*}\label{double-big}
\begin{gathered}
 \xymatrix@R20pt@C8pt
 	{
	&
	&
	& \Tor_{1}(\Omega A, I^{1}) \ar[dr]^-{\epsilon_{1}} 
	& 
	& 0 \ar[d]
	&	
\\
	&
	&
	&
	&\Tor_{1}(\Omega A, \Sigma^{2} B) \ar[dr]^-{\alpha_{1}} \ar@{.>}[r]
	& \Tor_1(A,\Sigma B) \ar[d]
	&
\\
	&
	&
	& \Omega A\otimes B \ar[d] \ar[r]
	& \Omega A\otimes I^0 \ar[d] \ar[r]
	& \Omega A\otimes 	\Sigma B \ar[d] \ar[r] \ar[dr]
	& 0	
\\
	&
	& 0\ar[r]
	& P_0\otimes B \ar[r] \ar[d] 
	& P_0\otimes I^0 \ar[r] \ar[d] 
	& P_0\otimes \Sigma B \ar[d] \ar@{>->}[dr]
	& \Omega A\otimes I^1 \ar[d]
\\
	&\Tor_{1}(A, I^{0}) \ar[r]^-{\epsilon}
	& \Tor_1(A,\Sigma B) \ar[r]^-{\alpha}
	& A\otimes B \ar[d] \ar[r]
	& A\otimes I^0 \ar[r] \ar[d]
	& A\otimes \Sigma B \ar[d]
	& P_{0} \otimes I^{1}
\\
	&
	&
	& 0 
	& 0
	& 0
	&
	} 
\end{gathered}
\end{equation*}
with exact rows and columns. Moreover, the diagonal is a fragment of a long homology exact sequence and, at the same time, the bottom row of~\eqref{D:big} with the fixed argument specialized to $\Omega A$ and with $\Sigma B$ replaced by $\Sigma^{2} B$ and $I^{0}$ replaced by $I^{1}$. The diagram shows that $\alpha_{1}$ factors through $\Tor_{1}(A, \Sigma B)$, giving rise to a unique dotted map making a commutative triangle. As 
$\Tor_1(A,\Sigma B) \longrightarrow \Omega A \otimes \Sigma B$ is monic, the dotted map composed with $\epsilon_{1}$ is zero, and therefore gives rise to a unique map
\[
\Coker \epsilon_{1} = S^{1}\Tor_{1}(\Omega A, \blank)(\Sigma B)
\longrightarrow \Tor_1(A,\Sigma B)
\]
Composing it with the isomorphism $ \gamma : \Tor_1(A, \Sigma B) \longrightarrow \Tor_1(A, \Sigma B)$ constructed in Lemma~\ref{L:gamma}
and with the canonical epimorphism 
\[
\Tor_1(A,\Sigma B) \longrightarrow 
\Coker \epsilon = S^{1}\Tor_{1}(A, \blank)(B),
\]
we declare the resulting composition to be the structure map 
\[
\Delta_{1} : S^{1}\Tor_{1}(\Omega A, \blank)(\Sigma B) \longrightarrow
S^{1}\Tor_{1}(A, \blank)(B).
\]
By Lemma~\ref{L:gamma}, it is compatible with the isomorphisms 
$S^{1}\Tor_{1}(A, \blank) \overset{\cong}\lra A \ot \blank$
%of~\cite[Proposition~9.3]{MR-1}
% ??? Corollary~\ref{P:via-Tor} 
(for fixed $A$ and $\Omega A$). Similar  arguments yield maps 
$\Delta_{i}$ for all natural $i$ and the compatibility with the connecting homomorphisms. We now have 
\begin{theorem}~\label{T:Jcompdir}
 The connecting homomorphism in the diagram~\eqref{D:big} induces an isomorphism of connected sequences of functors
\[
(S^{1} \Tor_{1}(\Omega^{i}A, \blank) \circ \Sigma^{i}, \Delta_{i}) 
\simeq 
(\Omega^{i} A \, \ot \, \Sigma^{i} \blank, \Delta_{i})
\] 
natural in $A$. \qed
\end{theorem} 

Passing to the componentwise limits in the foregoing isomorphisms, we have, in summary, that the three constructions of the asymptotic stabilization of the tensor product yield isomorphic connected sequences of functors.

\begin{corollary}\label{C:T-vs-J}
The asymptotic stabilization $\T(A,\blank)$ and the $J$-completion of the connected sequence $\Tor_{\ast}(A,\blank)$ are isomorphic as connected sequences of functors.\end{corollary}

\begin{proof}This follows from the fact that the directed system involved in the construction of the $J$-completion~\cite{Tr} and the directed system used in the construction of the asymptotic stabilization are isomorphic.  The isomorphism is precisely that appearing in Theorem ~\ref{T:Jcompdir}. \end{proof}
\begin{remark}
 For a more precise statement of the foregoing corollary see~Proposition~\ref{P:comparison} below.
\end{remark}

\section{The comparison homomorphisms}\label{S:comparison}

At the moment we have three constructions of stable homology: Vogel homology, the $J$-completion of the univariate $\Tor$ functors, and the asymptotic stabilization of the tensor product. Our next goal is to compare them.

\subsection{From Vogel homology to the asymptotic stabilization of the tensor product}

First, we want to construct a natural transformation from Vogel homology to the asymptotic stabilization of the tensor product. This will be done in degree zero;  all other degrees are treated similarly. Let $A$ be a right $\Lambda$-module with a projective resolution $(P, \partial_{P}) \lra A$, and $B$ be a left $\Lambda$-module with an injective resolution $B \lra (I, \partial_{I})$. Recall that the differential on $\V_{\bullet} (A,B)$ is induced by  
$ \partial_{P} \otimes 1 +(-1)^{\deg_{1}(\blank)}1\otimes \partial_{I}$ 
(see~\eqref{vogel-differential}).
To simplify notation, we set 
\[
d_{P} := \partial_{P} \otimes 1 \quad \text{and} \quad d_{I} := 1\otimes \partial_{I}.
\]
A homology class in $\V_0 (A,B)$ can be represented by an infinite sequence 
\[
s = (s_i)_{i=1}^\te \in (P_1\otimes I^0) \times (P_2\otimes I^1)\times \cdots
\] 
which vanishes under the differential of $\V_\bullet(A,B)$. This means that  
\[
D(s)=(d_P(s_1), -d_I(s_1)+d_P(s_2), d_I(s_2)+d_P(s_3),-d_I(s_3)+d_P(s_4),\ldots)
\]
represents the zero class in $\V_{-1}(A,B)$ and therefore has only finitely many nonzero components. Let $k$ be the smallest index such that 
\begin{equation}\label{confluence}
 \begin{aligned}
 d_I(s_k) 		& = d_P(s_{k+1}) \\
 d_I(s_{k+1}) 	& = -d_P(s_{k+2}) \\
 d_I(s_{k+2}) 	& = d_P(s_{k+3}) \\
 d_I(s_{k+3})	& =-d_P(s_{k+4}) \\
 			& \ldots
\end{aligned}
\end{equation}
In short, for all $i \geq 0$ 
\[
d_{I}(s_{k+i})=(-1)^{i} d_{P}(s_{k+i+1})
\]
Observe that since $s_{k+1}\in P_{k+1}\otimes I^k$, $d_P(s_{k+1})\in P_k\otimes I^k$.  Denote $d_P(s_{k+1})$ by $\bullet$ in the following commutative diagram with exact rows and columns:
\dia
	{
	& \Omega^{k+1}A \otimes \Sigma^{k}B \ar[d] \ar[r]
	& \underset{\circ}{\Omega^{k+1} A \otimes I^k} \ar[d] \ar[r]
	& \Omega^{k+1}A\otimes \Sigma^{k+1}B\ar[d] \ar[r]
	& 0 
\\
	0 \ar[r]
	&\ar[d]\underset{\Box}{P_k\otimes \Sigma^k B}\ar[r]
	& \ar[d]\underset{\bullet}{P_k\otimes I^k}\ar[r]
	& \ar[d]P_k\otimes \Sigma^{k+1} B \ar[r]
	& 0
\\
	& \ar[d]{\Omega^k A\otimes \Sigma^k B}\ar[r]
	&\ar[d]\Omega^kA\otimes I^k\ar[r]
	&\ar[d]\Omega^k A\otimes \Sigma^{k+1}B
\\
	& 0
	& 0
	& 0
	}  
Since $\bullet = d_{I}(s_{k})$, it pulls back to some element 
$\Box$. Pushing it down, we produce $\omega_{k} \in \Omega^k A \otimes \Sigma^k B$. Since $\bullet = d_{P}(s_{k + 1})$, the element $\bullet$ is the image of some element $\circ$ in $\O^{k+1}A \otimes I^{k}$. By the commutativity of the diagram, the image of $\omega_{k}$ in $\O^{k} A \otimes I^{k}$ is zero, i.e., $\omega_{k} \in \O^k A \ot \s^k B$, and we set 
$\varphi_{k} : = \omega_{k}$.

This process is well-defined up to choice of sign.  To see this, notice that the element $\bullet$ goes to 0 when applying the horizontal map.  Hence, by commutativity of the diagram, $s_{k+1}$ also goes to 0 using the vertical top right map and hence is in the kernel of this map.   Now one can apply the map from the snake lemma to produce the exact same element $\omega_{k}$.  Since we may also take the negative of this connecting homomorphism, we even have the freedom to choose $\pm \omega_{k}$.  Once this choice is fixed, $\omega_{k}$ will be well-defined.  
%Suppose that we make different choices along the way, i.e. that instead of choosing $\circ$, with choose $\triangle\in \Omega^{k+1} A\otimes I^k$ hitting $s_{k+1}$.  Then the difference $\circ-\triangle$ maps to $s_{k+1}-s_{k+1}=0$ which means it comes from  $\circ'-\triangle'\in \Omega^{k+1} A\otimes \Sigma B^k$.  Following our recipe above, we will for $\circ$ and $\triangle$ get two elements $\omega_{k,\circ}$ and $\omega_{k,\triangle}$ in $\Omega^kA\otimes\Sigma^kB$.  We want to show that these two elements are equal.   

%\fbox{Need to show that $\omega_{k}$ is well-defined}
 
Next, apply the same procedure to $d_P(s_{k+2}) = -d_I(s_{k+1}) \in P_{k+1} \otimes I^{k+1}$, producing $\omega_{k+1} \in \O^{k+1} A \ot \s^{k+1} B$. Flipping its sign, we set $\varphi_{k+1} := - \omega_{k+1}$. To obtain 
$\varphi_{k+2}$, perform the same procedure with $d_P(s_{k+3})$ and set 
$\varphi_{k+2} := - \omega_{k+2}$. To obtain $\varphi_{k+3}$, perform the same procedure with $d_P(s_{k+4})$ and set $\varphi_{k+3} := \omega_{k+3}$.  
Iterating this process, for any $ i \geq 0$, we set 
\[
\varphi_{k + i} := 
\begin{cases}
 \omega_{k +i} & \text{if } i \equiv 0,3 \pmod {4} \\
 -\omega_{k +i} & \text{if } i \equiv 1,2 \pmod {4}.
\end{cases}
\]
We claim that the sequence $(\varphi_k, \varphi_{k+1},\ldots)$ is coherent, i.e., in the notation of~\eqref{Eq:directed}, $\Delta_{n}(\varphi_{n}) = \varphi_{n-1}$ for any $n \geq k +1$. It suffices to check this claim for $n = k +1$; the remaining cases are similar. To this end, we examine the commutative diagram 
\[
\xymatrix
	{
	& P_{k + 1} \otimes I^{k} \ar[d] |*+[o][F-]{6} \ar[r] |(.4)*+[o][F-]{5} 
	\ar@{}[dr] |*+[o]	[F-] {T} \ar@/^2pc/[rr] |-{d_{I}}  \ar@/_3pc/[dd] |-{d_{P}}
	& P_{k + 1} \otimes \Sigma^{k + 1} B \ar[d] |*+[o][F-]{2} \ar[r] |*+[o][F-]{1}
	& \underset{\bullet}{P_{k + 1} \otimes I^{k + 1}} \ar[d]
\\
	& \O^{k +1} A \otimes I^{k} \ar[d] |*+[o][F-]{8} \ar[r] |<<<<*+[o][F-]{7}
	& \underset{\varphi_{k+1}}{\O^{k+1} A \otimes \Sigma^{k + 1} B} \ar[d] \ar[r]
	& \O^{k+1} A \otimes I^{k + 1}	
\\
	P_{k} \otimes \Sigma^{k} B \ar[d] |*+[o][F-]{4} \ar[r] |*+[o][F-]{3}
	& \underset{\bullet}{P_{k} \otimes I^{k}} \ar[r]
	& P_{k} \otimes \Sigma^{k + 1} B
\\
	\underset{\varphi_{k}}{\O^{k}A \otimes \Sigma^{k} B}
	}
\] 
Here $(1) \circ (5) = d_{I}$, $(8) \circ (6) = d_{P}$,  and the bullets denote 
\[
d_{P}(s_{k + 2}) = - d_{I} (s_{k + 1}) = - (1) \circ (5) (s_{k + 1}) \quad \text{and} 
\quad d_{P}(s_{k + 1}).
\]
The element $\varphi_{k+1}$ is obtained from the upper bullet by applying 
$(2) \circ (1)^{-1}$ and $\varphi_{k}$ is obtained from the lower bullet by applying $(4) \circ (3)^{-1}$. Since the square $T$ commutes, 
\[
\varphi_{k+1} = - (2) \circ (1)^{-1}(d_{P}(s_{k + 2})) 
= - (7) \circ (6) \circ (5)^{-1} \circ(1)^{-1}(d_{P}(s_{k + 2})).
\]
On the other hand, recalling the construction of $\Delta_{k+1}$ (this is just the restriction of the connecting homomorphism in the diagram~\eqref{D:big}) we have 
\begin{align*}
\begin{split}
 \Delta_{k+1} (\varphi_{k+1}) 
 	& = (4) \circ (3)^{-1}\circ (8) \circ (7)^{-1} (\varphi_{k+1})\\
 	& = - (4) \circ (3)^{-1}\circ (8) \circ (7)^{-1} \circ (7) \circ (6) \circ (5)^{-1} 		\circ(1)^{-1}(d_{P}(s_{k + 2})) \\
	& = (4) \circ (3)^{-1}\circ (8) \circ (6) \circ (5)^{-1} \circ(1)^{-1}
	(d_{I}(s_{k + 1})) \\
	& = (4) \circ (3)^{-1} \circ (8) \circ (6) (s_{k + 1}) \\
	& = (4) \circ (3)^{-1} (d_{P}(s_{k+1})) \\
	& = \varphi_{k}.
\end{split}
\end{align*}
Thus we have shown that the sequence $(\varphi_k, \varphi_{k+1},\ldots)$ is coherent. It uniquely extends to a coherent sequence $(\varphi_i)_{i=0}^{\infty}$,
and we set $\kappa_{0}(s) := (\varphi_i)_{i=0}^{\infty}$. A similar argument yields $\kappa_l : \V_{l}(A,\blank) \lra \T_{l}(A,\blank)$ for each integer $l$. 

\begin{theorem}
Let $A$ be a right $\Lambda$-module. For each $l \in \Z$, 
\[
\kappa_l : \V_{l}(A,\blank) \lra \T_{l}(A,\blank)
\]
is a natural transformation.
\end{theorem}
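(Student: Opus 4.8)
The plan is to verify the two requirements of a natural transformation separately: first that $\kappa_l$ is a homomorphism of abelian groups for each fixed left $\Lambda$-module, and second that the family $\{\kappa_l\}$ is natural in the second argument, i.e. commutes with maps $B \to B'$ of left $\Lambda$-modules. Since all degrees are handled the same way, I would carry out everything in degree zero, as in the construction preceding the statement, and only remark on the shift for general $l$.

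For additivity, I would first check that $\kappa_0$ is well-defined, independent of the choices made along the way (the stabilization index $k$, and the sign conventions). The text already observes that the element $\omega_k$ produced from $\bullet = d_P(s_{k+1})$ is exactly the image of $s_{k+1}$ under the connecting homomorphism of the displayed $3\times 3$ diagram, hence canonical up to the global sign choice that was fixed once and for all; the only remaining ambiguity is the choice of confluence index $k$, and enlarging $k$ to $k+1$ just drops the first term of the coherent sequence and shifts the sign pattern consistently, so $\kappa_0(s)$ is unchanged because it is the unique coherent extension to $(\varphi_i)_{i=0}^\infty$. I would also check that $\kappa_0$ kills boundaries: if $s = D(t)$ then one can take the confluence index $k$ large enough that the relevant $s_{k+i}$ are genuine boundaries, and pushing a boundary through the snake-lemma diagram yields zero in $\Omega^{k}A \ot \Sigma^k B$ (this is the injective-stability of the tensor product applied to a term that lifts through $P_k \otimes I^k$). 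Granting well-definedness, additivity is then immediate: $\bullet$, the lift $\Box$, the intermediate $\circ$, and the output $\omega_k$ are all obtained by applying additive maps (and pulling back along monomorphisms, which is additive on the subset where it is defined), and coherent sequences add componentwise.

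For naturality, let $g : B \to B'$ be a homomorphism of left $\Lambda$-modules. Choose injective resolutions $I \to $ and $I' \to$ of $B$ and $B'$ and lift $g$ to a chain map $\tilde g : I \to I'$; this induces $\Sigma^i g : \Sigma^i B \to \Sigma^i B'$ on cosyzygies, and hence a map of Vogel complexes $\mathbf P \overset{\vee}{\otimes}\mathbf I \to \mathbf P \overset{\vee}{\otimes}\mathbf I'$ realizing $\V_0(A,g)$, and compatibly a map of the directed systems $M_0(A,B) \to M_0(A,B')$ realizing $\T_0(A,g)$ (by Proposition~\ref{P:functorial}, which also guarantees the structure maps $\Delta_i$ are natural in the second argument). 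I would then trace a cycle $s$ through both routes: applying $\tilde g$ first, the confluence relations~\eqref{confluence} are preserved (the differentials commute with $1 \otimes \tilde g$), and the $3\times 3$ diagram used to produce $\omega_k$ maps to the corresponding diagram for $B'$ by tensoring the chain map $\tilde g$ with the fixed projective data; naturality of the snake-lemma connecting homomorphism then gives that $\omega_k$ maps to $\omega_k'$, and the same sign conventions are used on both sides, so $(\Omega^i g)(\varphi_{k+i}) = \varphi'_{k+i}$ for all $i$. Since coherent sequences and the structure maps transform naturally, this gives $\T_0(A,g) \circ \kappa_0 = \kappa_0 \circ \V_0(A,g)$.

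The main obstacle I anticipate is bookkeeping rather than conceptual: making sure the confluence index $k$ can be chosen uniformly for $s$, $s+s'$, and $(1\otimes \tilde g)(s)$ simultaneously (it can, by taking the maximum), and checking that the lift $\tilde g$ can be chosen compatibly with the already-fixed sign conventions so that no extra sign is introduced when passing the $3\times 3$ diagram through $1 \otimes \tilde g$. One subtlety worth isolating: the construction involves \emph{pulling back} $\bullet$ to $\Box$ along the monomorphism $P_k \otimes \Sigma^k B \hookrightarrow P_k \otimes I^k$ (this pullback exists precisely because $\bullet = d_I(s_k)$ lies in the image), and one must confirm that under $1 \otimes \tilde g$ the element $\bullet$ still lies in the corresponding image for $B'$ — but this is exactly the commutativity of $\tilde g$ with the injective differentials, so no genuine difficulty arises. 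I would therefore present naturality as a single diagram chase across the cube obtained by tensoring the lifted chain map with the snake diagram, invoking naturality of the connecting homomorphism (our blanket staircase assumption) to conclude.
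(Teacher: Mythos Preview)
Your proposal is correct, and the core naturality argument --- tracing a cycle through the lifted chain map $\tilde g$ and invoking naturality of the snake-lemma connecting homomorphism --- is exactly what the paper does, though the paper compresses it to a single sentence: ``this follows from the naturality of the connecting homomorphism.''

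Where you differ is in scope rather than method. The paper's proof begins with ``We only need to show the naturality of each $\kappa_l$,'' treating well-definedness on homology classes and additivity as already settled by the construction (the text before the theorem handles the sign ambiguity but is silent on boundaries and sums). You, by contrast, explicitly check independence of the confluence index~$k$, vanishing on boundaries, and additivity. This extra care is justified --- the construction as written does leave those points implicit --- and your arguments for them are sound, if sketchy on the boundary case. So your proof is more complete than the paper's, but the essential idea is the same.
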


\begin{proof}
We only need to show the naturality of each $\kappa_{l}$. But this follows from the naturality of the connecting homomorphism.
\end{proof}

\begin{theorem}
In the above notation, for each $l \in \Z$, the natural transformation $\kappa_l : \V_{l}(A,\blank) \lra \T_{l}(A,\blank)$ is an epimorphism.
\end{theorem}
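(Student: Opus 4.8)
The plan is to show surjectivity of $\kappa_l$ by producing, for an arbitrary coherent sequence $(\varphi_i)_{i=0}^\infty \in \T_l(A,B) = \varprojlim \Omega^{k+l}A \ot \Sigma^k B$, a Vogel cycle $s$ with $\kappa_l(s) = (\varphi_i)$. By the dimension-shift Lemma~\ref{L:dim-shift} and the shift in~\eqref{vogel-homology}, it suffices to do this for $l = 0$, so fix a class $\varphi = (\varphi_i)_{i \ge 0}$. The construction of $\kappa_0$ reads off, from a cycle $s = (s_i)_{i \ge 1}$, the ``confluence point'' $k$ of~\eqref{confluence} and then extracts $\varphi_{k+i}$ as a connecting-homomorphism image of $d_P(s_{k+i+1})$. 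The idea is to run this backwards: pick a large index $n$ (any $n \ge 1$ will do; the freedom in $k$ costs nothing), lift $\varphi_n \in \Omega^n A \ot \Sigma^n B$ through the maps in the diagram defining $\kappa_0$ to an element of $P_{n-1}\otimes I^{n-1}$ that equals $d_P$ of something in $P_n \otimes I^{n-1}$ and simultaneously equals $d_I$ of something in $P_{n-1}\otimes I^{n-2}$, and use coherence $\Delta_n(\varphi_n)=\varphi_{n-1}$ to patch these lifts together compatibly as the index decreases.

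The key steps, in order, are as follows. First, unwind the defining diagram of $\kappa_0$: $\varphi_{k+i}\in \Omega^{k+i}A \ot \Sigma^{k+i}B \subset \Omega^{k+i}A\otimes\Sigma^{k+i}B$ arises as the snake connecting map applied to an element $\bullet = d_P(s_{k+i+1}) \in P_{k+i}\otimes I^{k+i}$ that lies in the kernel of $P_{k+i}\otimes I^{k+i}\to P_{k+i}\otimes\Sigma^{k+i+1}B$; equivalently $\varphi_{k+i}$ is the image of a preimage $\Box\in P_{k+i}\otimes\Sigma^{k+i}B$ of $\bullet$. Second, reverse this: given $\varphi_n$, choose (via surjectivity of $P_n\otimes\Sigma^n B\to\Omega^n A\otimes\Sigma^n B$ restricted suitably, and exactness in~\eqref{big}) an element $b_n \in P_{n-1}\otimes I^{n-1}$ in the relevant kernel whose connecting-map image is $\varphi_n$; set $s_n$ to be any preimage of $b_n$ under $d_P: P_n\otimes I^{n-1}\to P_{n-1}\otimes I^{n-1}$, which exists because $b_n$ maps to $0$ in $P_{n-1}\otimes\Sigma^n B$ hence lies in $\Imr d_P$ by exactness of the tensored projective resolution. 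Third, propagate downward: the coherence relation $\Delta_n(\varphi_n)=\varphi_{n-1}$, together with the explicit description of $\Delta_n$ as a restricted connecting homomorphism (exactly the diagram chase on page~\pageref{glue} run in reverse), forces $d_I(s_n) = \pm d_P(s_{n+1}')$-type compatibility, so that one can choose the $s_i$ for $i < n$ so that~\eqref{confluence} holds with $k$ any value $\le$ a fixed bound. Fourth, below the confluence point one completes $s$ to an honest element of $\prod_i (P_i\otimes I^{i-1})$: the partial sequence $(s_i)_{i\le n}$ need not be a cycle on its own, but one adjusts finitely many low-degree components so that $D(s)$ has only finitely many nonzero terms, i.e., $s$ is a Vogel cycle, and the high-degree confluence guarantees $\kappa_0(s) = \varphi$. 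Finally, invoke the sign bookkeeping in the definition of $\varphi_{k+i}$ (the $(-1)$-pattern $i\equiv 0,3$ vs. $1,2 \bmod 4$) to confirm the signs match.

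The main obstacle I expect is the \emph{downward propagation with control of the confluence index}: one must verify that the coherence of $(\varphi_i)$ in the inverse limit is exactly strong enough to choose the $s_i$ so that $d_I(s_{k+i}) = (-1)^i d_P(s_{k+i+1})$ holds \emph{on the nose} for all large $i$, not merely up to the ambiguity in the preimages. This is where Lemma~\ref{L:gamma} and the two connecting-homomorphism lemmas (Lemmas~\ref{L:down-horizontal} and~\ref{L:horizontal-down}) do the real work: the former identifies the two copies of $\Tor_1$ that appear when one compares the connecting map of~\eqref{big} with the $\Tor$ long exact sequence, and the latter pin down the (anti)commutativity that turns ``$\Delta_n(\varphi_n)=\varphi_{n-1}$'' into a usable relation among the chosen lifts $s_i$. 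A secondary, purely bookkeeping, difficulty is making the low-degree adjustment in Step four without disturbing the already-fixed high-degree components — but since only finitely many components are involved and the relevant squares are exact, this is routine. Once the lift $s$ is produced, $\kappa_0(s)=\varphi$ is immediate from the construction, and naturality was already established, so epimorphism follows.
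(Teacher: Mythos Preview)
Your plan has the right ingredients but the induction runs in the wrong direction, and this is not a cosmetic issue. A Vogel $0$-cycle is an \emph{infinite} sequence $s=(s_i)_{i\ge 1}$ with $s_i\in P_i\otimes I^{i-1}$, and the confluence relations~\eqref{confluence} must hold for \emph{all} sufficiently large $i$. Your steps two through four fix a single index $n$, produce $(s_i)_{i\le n}$ by ``propagating downward,'' and then speak of ``adjusting finitely many low-degree components'' and of ``high-degree confluence.'' But no high-degree components $s_i$ with $i>n$ are ever constructed, so there is no high-degree confluence to invoke and no element of $\prod_i(P_i\otimes I^{i-1})$ to speak of. Propagating downward from a fixed $n$ can only yield a finite string.

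The paper's argument runs the induction \emph{upward}. One first lifts $\varphi_1$ to some $s_1\in P_1\otimes I^0$ via the surjection $P_1\otimes I^0\twoheadrightarrow \Omega A\otimes\Sigma B$; then, having built $s_1,\ldots,s_{k-1}$ satisfying~\eqref{confluence}, one lifts $\pm\varphi_k$ to an arbitrary $t_k\in P_k\otimes I^{k-1}$ and shows by a direct chase in the diagram on that page that the obstruction $d_I(s_{k-1})\mp d_P(t_k)$ lies in $d_P d_I(P_k\otimes I^{k-2})$, so one corrects $t_k$ to $s_k:=t_k-d_I(y_k)$. This produces the entire infinite sequence, and the confluence relations hold from $k=1$ onward, so no separate ``low-degree adjustment'' step is needed at all. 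Note also that the key correction step is an elementary chase inside a single planar diagram; Lemmas~\ref{L:gamma}, \ref{L:down-horizontal}, and~\ref{L:horizontal-down} concern compatibility of connecting homomorphisms in $3\times3\times3$ cubes and play no role here---they are used elsewhere, for the connected-sequence structure, not for surjectivity of $\kappa$. If you reverse the direction of your induction and replace the appeal to those lemmas with the explicit correction $s_k=t_k-d_I(y_k)$, your outline becomes essentially the paper's proof.
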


\begin{proof}The proof is primarily a diagram chase and only a sketch will be given.  Let $(\varphi_0,\varphi_1,\varphi_2,\ldots)$ be a coherent sequence in the asymptotic stabilization of the tensor product.  Then $\varphi_k\in \Omega^kA\otimes\Sigma^k B$.  We will construct an element in $V_0(A,B)$ which maps onto this coherent sequence.  One will benefit from the following diagram:  
\dia{&&&\ar[ddl]P_2\otimes I^0\ar@{>>}[d]\ar[r]&P_2\otimes I^1\ar@{>>}[d]\ar[r]&P_2\otimes \s^2 B\ar@{>>}[d]\ar[r]&0\\
&&&\O^{2}A\otimes\Sigma^1B\ar@{-->}[r]\ar@{-->}[d]&\O^2A \otimes I^1\ar@{-->}[d]\ar@{-->}[r]&\underset{\varphi_{_2}}{\O^2A\otimes \s^2B}\ar@{-->}[d]\ar@{-->}[r]&0\\
0\ar[r]&P_1\otimes B\ar[r]\ar@{>>}[d]&P_1\otimes I^0\ar@{>>}[r]\ar@{>>}[d]&P_1\otimes \Sigma^1B\ar@{-->>}[d]\ar@{^{(}-->}[r]&P_1\otimes I^1\ar@{-->}[r]\ar@{-->}[d]&P_1\otimes \s^2 B\ar@{-->}[d]\ar@{-->}[r]&0\\
&\O^1A\otimes B\ar@{=>}[d]\ar@{=>}[r]&\O^1 A\otimes I^0\ar@{=>}[d]\ar@{=>>}[r]&\underset{\varphi_{_1}}{\O^1A\otimes \s^1} B\ar@{=>}[d]\ar@{-->}[r]&\O^1 A\otimes I^1
\ar@{-->}[r]&\O^1 A\otimes \s^2  B\ar@{-->}[r]&0\\
0\ar@{=>}[r]&\ar@{=>}[d]P_0\otimes B\ar@{=>}[r]&\ar@{=>}[d]P_0\otimes I^0\ar@{=>}[r]&\ar@{=>}[d]P_0\otimes \s^1 B\\
&\underset{\varphi_{_0}}{A\otimes B}\ar@{=>}[d]\ar@{=>}[r]& A\otimes I^0\ar@{=>}[d]\ar@{=>}[r]& A\otimes \s B\ar@{=>}[d]\\
&0&0&0
}

Start by selecting $s_1\in P_1\otimes I^0$ that maps onto $\varphi_1$.  Then $d_P(s_1)$ will pullback to $\varphi_0$.  Now select $t_2\in P_2\otimes I^1$ that maps onto $\varphi_2$.  By diagram chase we get that there exists $y_2\in P_2\otimes I^0$ such that $d_I(s_1)-d_P(t_2)=d_P(d_I(y))$ which yields $d_I(s_1)=d_P(t_2-d_I(y))$.  Define $s_2:=t_2-d_I(y_2)$.  Then $s_2$ still maps onto $\varphi_2$ and $d_P(s_2)$ pulls back to $\varphi_1$.  

Now select $t_3\in P_3\otimes I^2$ that maps onto $-\varphi_3$.  Then $-d_P(t_3)$ pulls back to $\varphi_2$ as does $d_I(s_2)$.  By diagram chasing, there exists $y_3\in P_3\otimes I^1$ such that $d_I(s_2)+d_P(t_3)=d_P(d_I(y_3))$.  Define $s_3:=t_3-d_I(y_3)$.  Then $s_3$ maps onto $-\varphi_3$ so $-d_P(s_3)$ pulls back to $\varphi_2$.  Moreover $d_I(s_2)=-d_P(s_3)$.  

If we continue this process paying attention to signs, we can construct an element $(s_k)_{k=1}^\te\in V_0(A,B)$ that maps onto the coherent sequence $(\varphi_k)_{k=1}^\te$.  The details are left to the reader.    \end{proof}

\subsection{From the asymptotic stabilization of the tensor product to the $J$-completion of the univariate $\Tor$}
Let $U$ be a connected sequence of functors and $\M_\bullet(U)$ its $J$-completion (see~\ref{S:J-completion}).  In~\cite[ Proposition 6.1.2]{Tr}, Triulzi shows that there is a morphism of connected sequences of functors 
$\tau:\M_\bullet(U)\to U$ satisfying the following universal property.  Given any morphism $\beta:V\to U$, where~$V$ is a connected sequence of functors that is injectively stable in all degrees, there exists a unique morphism 
$\phi:V\to \M_\bullet(U)$ such that $\phi\tau=\beta$.  From this, we can now establish a commutative diagram of comparison maps between Vogel homology, the asymptotic stabilization of the tensor product, and the $J$-completion of the univariate $\Tor$ functor.

\begin{proposition}\label{P:comparison}
For any module $A$, there is a commutative diagram of connected sequences of functors
\dia
	{
	\V_\bullet(A,\blank) \ar@{->>}[r]^{\kappa} \ar[d]_{\theta}
	& \T_\bullet(A,\blank) \ar[ld]_{\simeq}^{\ge} \ar[d]^{\lambda}
\\
	\M_\bullet(\Tor(A,\blank))\ar[r]_{\tau}
	& \Tor(A,\blank)
	}
where $\tau$ is the $J$-completion of $\Tor(A,\blank)$. Moreover, $\gl \gk$ is the canonical natural transformation from Vogel homology to $\Tor$.
\end{proposition}

\begin{proof}
For $\lambda$, take the natural transformation from the limit to the first term. The diagonal isomorphism $\ge$ is taken from Corollary~\ref{C:T-vs-J}. Under that isomorphism, $\lambda$ is identified with $\gt$, i.e., the lower triangle commutes. Now set $\theta : = \ge \kappa$, thus making the whole square commute. The last assertion is verified by a direction calculation.
\end{proof}

Since the connected sequence of functors $\V_\bullet(A,\blank)$ is $J$-complete, the universal property of the $J$-completion yields

\begin{corollary}\label{C:comparison}
$\theta$ is the unique lifting of the canonical natural transformation 
$\gl \gk : \V_\bullet(A,\blank)\to \Tor(A,\blank)$ against~$\gt$. \qed
\end{corollary}

As an immediate application of Proposition~\ref{P:comparison} and Corollary~\ref{C:comparison}, we have (see also~\cite[Corollary 6.2.10]{Tr} and ~\cite[Theorem 69]{R-Thesis})
\begin{proposition}
 The comparison map from Vogel homology to the $J$-completion of $\Tor$
 is epic in each degree. \qed
\end{proposition} 

In view of the foregoing result, it is natural to try and identify the kernel of 
$\kappa : \V_\bullet(A,\blank) \lra  \T_\bullet(A,\blank)$ or, equivalently, of 
$\theta : \V_\bullet(A,\blank) \lra  \M_\bullet(\Tor(A,\blank))$. Driven by a formal analogy between $\kappa$ and the natural transformation from 
Steenrod-Sitnikov homology to \v{C}ech homology, the first author conjectured in 2014 that the kernel of $\kappa$ should be given by a derived limit. The following recent result of I.~Emmanouil and P.~Manousaki shows that this is indeed the case.

\begin{theorem}[\cite{EM}, Theorem 2.2]
 There is an exact sequence
 \[
 0 \lra \underset{i}{\varprojlim}^{1} \Tor_{\bullet + i +1}(A, \Sigma^{i} \blank)
 \lra V_{\bullet}(A, \blank) \lra \M_{\bullet}(\Tor(A, \blank)) \lra 0. \qed
 \]
\end{theorem}

 \section{(Co)completeness of finitely presented functors}\label{S:(co)comp} 
Our goal now is to show that the category $\fp$ of finitely presented functors is complete and cocomplete. It appears that these results, stated in the  different language of left-exact sequences, were first established by Ron Gentle~\cite[Remark 1.3. (b)]{Gen}. The proofs presented here are different and slightly more precise: we work directly in the functor categories and show how the (co)limits can be computed.

\subsection{The category of finitely presented covariant functors is complete}

Yoneda's lemma classifies natural transformations from a representable functor to an arbitrary functor. It is also possible to describe natural transformations going in the opposite direction when the arbitrary functor is replaced by a finitely presented one.
Thus, let $(X, \blank) \lra (Y, \blank) \lra F \lra 0$ be exact, where the first map is of the form $(f, \blank)$ for some $f : Y \lra X$. Recall that $\Ker f$ is called the defect 
of~$F$ and is denoted by $w(F)$. Given a representable functor $(Z, \blank)$ we take  
natural transformations into it from the above presentation of $F$, which results in an exact sequence
\[
0 \lra \big(F, (Z, \blank) \big) \lra (Z, Y) \lra (Z, X).
\]
On the other hand, mapping $Z$ into the exact sequence $0 \lra w(F) \lra Y \lra X$, we have an exact sequence 
\[
0 \lra (Z, w(F)) \lra (Z, Y) \lra (Z, X).
\]
Comparing the two sequences, we have
\begin{lemma}
There is a binatural isomorphism 
\[
\big(F, (Z, \blank) \big) \cong \big(Z, w(F)\big).
\]
In other words, the contravariant Yoneda embedding $\mathsf{Y} : \LMod \lra \fp$ and the defect are adjoint to each other on the right.\footnote{It is not difficult to show that this isomorphism is canonical, i.e., independent of the chosen presentation of $F$. This is the reason for using the $\cong$ sign rather than $\simeq$.} \qed
\end{lemma}

For later use, we also recall
\begin{lemma}
 The defect and the zeroth left-derived functor of the contravariant Yoneda's embedding are adjoint to each other on the left, i.e., for any finitely presented covariant functor $F$ and any module $A$ there is a binatural isomorphism 
 \[
 \big(L^{0}\mathsf{Y}(A), F \big) \simeq \big(w(F), A \big).
 \]
\end{lemma}

\begin{proof}
Follows from the definition of the zeroth left-derived functor and the fact that 
 $\big(w(F), \blank \big) \simeq R^{0}F$.
\end{proof}

Combining the previous two lemmas, we have

\begin{proposition}
 The defect is a (contravariant) biadjoint. In particular, it interchanges limits and colimits.\qed
\end{proposition}

 \begin{proposition}\label{P:fp-lim}
 The category of finitely presented covariant functors on $\Lambda$-modules is complete and limits can be computed componentwise. 
 \end{proposition}
 
\begin{proof}
 The category of finitely presented covariant functors is abelian with kernels (and cokernels) defined componentwise. Thus it suffices to show that this category has products and that products can be computed componentwise. This is true for products of representable functors. More precisely, we claim that  the desired product $\prod (X_{i}, \blank)$ is just $(\coprod X_{i}, \blank)$\footnote{The reader is cautioned against making a hasty claim that this is obvious. This is not obvious and requires a proof because the product on the left should be taken in a functor category.} with structure maps induced by the canonical injections $\iota_{i} : X_{i} \to \coprod X_{i}$.
 To see that, let $F$ be a finitely presented functor and suppose we have a family of natural transformations $\ga_{i} : F \lra (X_{i}, \blank)$. As we just saw, each $\ga_{i}$ is uniquely determined by an element of  $(X_{i}, w(F))$, which we denote again by 
 $\ga_{i}$. These elements give rise to a unique  $\beta \in (\coprod X_{i}, w(F))
 \cong \big(F, (\coprod X_{i}, \blank)\big)$ such that $\gb \iota_{i} = \ga_{i}$ for each $i$. 
 Switching back to functors and natural transformations, we have a unique $\gb$ such that $(\iota_{i}, \blank) \gb = \ga_{i}$ for each $i$. This establishes the claim for representable functors.\footnote{Using a more conceptual language, we have just shown that the contravariant Yoneda embedding converts coproducts into products.} 
 Moreover, as the contravariant $\Hom$ converts coproducts in each component to a product in abelian groups, the products of representables can be computed componentwise. Since AB4* holds in the category of abelian groups, it now follows that products of finitely presented functors exist and are computed componentwise.
\end{proof}

\subsection{The category of finitely presented covariant functors is cocomplete}

\begin{lemma}\label{L:fp-com-products}
 Finitely presented covariant functors commute with products.
\end{lemma}

 \begin{proof}
 This follows from the facts that covariant representable functors have this property and products preserve epimorphisms in abelian groups.
\end{proof}

\begin{theorem}\label{T:cocomplete}
 The category $\fp$ of finitely presented functors is cocomplete.
\end{theorem}

\begin{proof}
It is convenient to introduce the following notation. Given a natural transformation 
 $\alpha : (A, -) \to F$, set $\mathbf{b}_{\alpha} := \alpha_{A}(1_{A}) \in F(A)$. By Yoneda's lemma, this is the element that uniquely determines $\alpha$.
 
First we show that $\fp$ has coproducts and we begin with coproducts of representables. Let $\{X_{i}\}_{i \in I}$ be an arbitrary family of modules. Associated with it is the family $\{(X_{i}, \blank)\}_{i \in I}$ of projectives in $\fp$.  Let $\pi_{j} : \prod X_{i} \lra X_{j}$ be the canonical projections. We now claim that the family 
 \[
 (\pi_{j}, -) :  (X_{j}, \blank) \lra (\prod X_{i}, \blank)
 \]
 is a coproduct of $\{(X_{i}, \blank)\}_{i \in I}$. To this end, for any functor $F$ and any family of natural transformations $\beta_{j} : (X_{j}, \blank) \lra F$, we need to find a natural transformation $\alpha : (\prod X_{i}, \blank) \lra F$ making each diagram
 \[
\begin{tikzcd}
 	(X_{j}, \blank) \ar[rr, "{(\pi_{j}, -)}"] \ar[rd, "\beta_{j}"']
	&
	& (\prod X_{i}, \blank) \ar[dl, dashrightarrow, "\alpha"]
\\
	& F
\end{tikzcd}
\]
commute. Each $\beta_{j}$ is determined by $\mathbf{b}_{\beta_{j}} \in F(X_{j})$. By Lemma~\ref{L:fp-com-products}, the canonical map $f : F(\prod X_{i}) \lra \prod F(X_{i})$ 
in the commutative diagram 
\[
\begin{tikzcd}
 	F(\prod X_{i}) \ar[r, "F(\pi_{j})"] \ar[d, "f"' , "\cong"]
	& F(X_{j})
\\
	\prod F (X_{i}) \ar[ur, "p_{j}"']
\end{tikzcd}
\]
is an isomorphism. We can now define $\alpha$ by setting $\mathbf{b}_{\alpha} := f^{-1}(\prod \mathbf{b}_{\beta_{i}})$. By Yoneda's lemma,
%Lemma~\ref{L:comm-triangle}, 
it suffices to check that  $ F(\pi_{j})(\mathbf{b}_{\alpha}) = \mathbf{b}_{\beta_{j}}$ for each $j$, i.e., 
\[
F(\pi_{j}) \big(f^{-1}(\prod \mathbf{b}_{\beta_{i}})\big) = \mathbf{b}_{\beta_{j}},
\]
which is immediate from the commutative diagram above. We have thus shown that  the family $ (\pi_{i}, \blank) : (X_{i}, \blank) \lra ( \prod X_{i}, \blank)$ is a coproduct of the
$(X_{i}, \blank)$ and, in particular, the  coproduct is represented by $\prod X_{i}$.\footnote{Using a more conceptual language, we have just shown that the contravariant Yoneda embedding converts products into coproducts.}

Now we can move on to the case of arbitrary  finitely presented functors. Let $\{F_{_{i}}\}_{i \in I}$ be a family of finitely presented functors with presentations
\[
(Y_{i}, \blank) \lra (X_{i}, \blank) \lra F_{i} \lra 0
\]
We claim that the cokernel of the induced natural transformation 
\[
\coprod (Y_{i}, \blank) \lra \coprod (X_{i}, \blank)
\]
is the desired coproduct of the $F_{i}$. This follows from a general fact: if in an abelian category there is a coproduct of a family of morphisms, then the cokernel of this coproduct is the coproduct of the corresponding cokernels. In summary, we have a defining exact sequence
\begin{equation}\label{D:}
 (\prod Y_{i}, \blank) \lra  (\prod X_{i}, \blank) \lra \coprod F_{i} \lra 0
\end{equation}
 As a result, we have that the category of finitely presented functors has coproducts. On the other hand, analogous to equalizers, coequalizers exist in this category. It now follows that $\fp$ is cocomplete.
\end{proof}

\begin{remark}
 Because the direct product cannot be taken out of the contravariant argument of the 
 $\Hom$ functor (i.e., the contravariant $\Hom$ functor does not convert direct products into direct sums), the coproduct, and therefore colimits, of finitely presented functors are not computed componentwise.
\end{remark}

\section{Coherence and defect of the inert asymptotic stabilization}\label{S:coherence}

%\addtocounter{subsection}{1}
\subsection{Coherence of the inert asymptotic stabilization}

We now turn attention to the inert univariate functor determined by 
$\T$ 
and give a sufficient condition for it to be finitely presented. We begin by recalling some known (at least to the experts) preliminary results. Recall that a functor is said to be finitely presented if it is a cokernel of a natural transformation between representable functors.

\begin{lemma}\cite[Lemma 6.1]{A66}\label{L:fp-tensor}
 If the left $\Lambda$-module $B$ is finitely presented, then so is the functor 
 $\blank \otimes B$.\footnote{The converse is also true, [ibid.].}
\end{lemma}

\begin{proof}
 Let $P_{1} \lra P_{0} \lra B \lra 0$ be a finite presentation. By the right-exactness of the tensor product, the sequence $\blank \otimes P_{1} \lra \blank \otimes P_{0} \lra  \blank \otimes B \lra 0$ is  exact. The duality for finitely generated projective modules yields a finite presentation $ (P_{1}^{\ast}, \blank ) \lra (P_{0}^{\ast}, \blank ) \lra  \blank \otimes B \lra 0$.
\end{proof}

\begin{lemma}\label{L:tor-1-fp}
 If $B$ is $FP_{2}$, then $\Tor_{1}(\blank, B)$ is finitely presented. More generally, if $B$ is $FP_{n+1}$, $n \geq 1$, then $\Tor_{n}(\blank, B)$ is finitely presented. 
\end{lemma}

\begin{proof}
By assumption, we have a syzygy sequence $0 \lra \Omega B \to P \to B \to 0$, where all modules are finitely presented. The corresponding long exact sequence
\[
0 \lra \Tor_{1}(\blank, B) \lra \blank \otimes \Omega B \lra \blank \otimes P \lra \blank \otimes B \lra 0
\]
and Lemma~\ref{L:fp-tensor} show that $\Tor_{1}(\blank, B)$, being the kernel of a natural transformation between finitely presented functors, is finitely presented. 
The general case can now be treated by dimension shift.
\end{proof}

\begin{proposition}\label{P:inert-fp}
 Suppose that a left $\Lambda$-module $B$ is $FP_{\infty}$ and that it has an injective resolution $(I^{i}, d^{i})$ such that all $I^{i}$ are also $FP_{\infty}$. Then, for all nonnegative integers $l$ and $i$, the functors  $\Omega^{l} \blank \ot \Sigma^{i}B$, where the cosyzygy modules of $B$ are computed relative to that resolution, are finitely presented.
\end{proposition}

\begin{proof}
The cosyzygy sequences $0 \lra \Sigma^{i}B \lra I^{i} \lra \Sigma^{i + 1}B \lra 0$ show that all cosyzygy modules of $B$ are $FP_{\infty}$. Each such sequence gives rise to a long exact sequence of Tor functors, which yields a presentation
 \[
  \Tor_{1}(\Omega^{l} \blank, I^{i}) \lra \Tor_{1}(\Omega^{l} \blank, \Sigma^{i+1}B) \lra 
 \Omega^{l} \blank \ot \Sigma^{i}B \lra 0.
 \]
 Rewriting it as 
 \[
 \Tor_{1}(\blank, \Omega^{l} I^{i}) \lra \Tor_{1}(\blank, \Omega^{l} \Sigma^{i+1}B) \lra \Omega^{l} \blank \ot \Sigma^{i}B \lra 0
 \]
and using Lemma~\ref{L:tor-1-fp}, we have the desired result.
\end{proof}

\begin{theorem}
 Suppose that a left $\Lambda$-module $B$ is $FP_{\infty}$ and that it has an injective resolution all of whose terms are also $FP_{\infty}$. Then the inert asymptotic stabilizations $\T_{n}(\blank, B)$, $n \in \Z$, are finitely presented.
\end{theorem}

\begin{proof}
 By dimension shift, it suffices to assume that $n=0$. Since $\T_{0}$ is a bifunctor, for any right $\Lambda$-module $A$, we have $\T_{0}(\blank, B)(A) \simeq 
 \T_0(A,\blank)(B)$, which is
 $\underset{k \geq 0}{\varprojlim}\, (\Omega^{k} A\ \ot\ \Sigma^k B)$. By Proposition~\ref{P:fp-lim}, the latter is just 
 $\underset{k \geq 0}{\varprojlim}\, (\Omega^{k} \blank\ \ot\ \Sigma^k B)(A)$. The constructed isomorphism is functorial in~$A$ and therefore we have a functor isomorphism 
\[
\T_{0}(\blank, B) \simeq \underset{k \geq 0}{\varprojlim}\, (\Omega^{k} \blank\ \ot\ \Sigma^k B). 
\]
Propositions~\ref{P:inert-fp} and~\ref{P:fp-lim} now show that $\T_{0}(\blank, B)$ is finitely presented.
\end{proof}

As an immediate consequence of the just proved result, we have

\begin{theorem}
 If $B$ is a finitely generated module over an artin algebra, then the functors
 $\T_{n}(\blank, B)$, $n \in \Z$ are finitely presented. \qed
 \end{theorem}
 
 \subsection{The defect of the inert asymptotic stabilization}
 
 In view of the foregoing theorems, it is natural to try and describe the defect of the inert stabilization when it is finitely presented. To this end, we first establish an auxiliary result. Given a right module $A$, choose a syzygy sequence $ 0 \to \Omega A \to P \to A \to 0 $. Given a left module $B$, choose a cosyzygy sequence 
 $ 0 \to B \to I \to \Sigma B \to 0 $ and a syzygy sequence
 $0 \to \Omega \Sigma B \to Q \to \Sigma B \to 0 $. Lifting the identity map on 
 $\Sigma B$, we have a commutative diagram 
 
\begin{equation}\label{Eq:double-decker}
\begin{tikzcd}
 	0 \ar[r]
	& \Omega \Sigma B \ar[r] \ar[d]
	& Q \ar[r] \ar[d]
	& \Sigma B \ar[r] \ar[d, equals]
	& 0
\\
	0 \ar[r]
	& B \ar[r]
	& I \ar[r]
	& \Sigma B \ar[r] 
	& 0
\end{tikzcd}
\end{equation}
In the leftmost vertical map we replace $B$ with $\Sigma B$ to obtain a map 
$\Omega \Sigma^{2} B \to \Sigma B$. Applying $\Omega$, we have a map 
$\Omega^{2} \Sigma^{2} B \to \Omega \Sigma B$. Iterating this process and applying the functor $\Tor_{1}(A, \blank)$ we have a sequence
\[
\dots \lra \Tor_{1}(A, \Omega^{2} \Sigma^{2} B) \lra \Tor_{1}(A, \Omega \Sigma B) \lra 
 \Tor_{1}(A, B).
\]
 Finally, replacing $B$ with $\Sigma B$, we have the sequence
\begin{equation}\label{Eq:o-s}
\dots \lra \Tor_{1}(A, \Omega^{2} \Sigma^{3} B) \lra \Tor_{1}(A, \Omega \Sigma^{2} B) \lra  \Tor_{1}(A, \Sigma B).
 \end{equation}
 Now we recall the second construction of the asymptotic stabilization and the intertwining diagram~\eqref{Eq:intertwine}.

\begin{lemma}\label{L:tor-1}
 The sequence~\eqref{Eq:o-s} is isomorphic to the top row of~\eqref{Eq:intertwine}.
\end{lemma}

\begin{proof}
 The proof can be accomplished by tensoring the diagram~\eqref{Eq:double-decker} with the syzygy sequence $ 0 \to \Omega A \to P \to A \to 0 $, doing a diagram chase, and using the balance of the bifunctor $\Tor$. The tedious but more or less straightforward details are left to the reader.
\end{proof}

Before we can describe the defect of the asymptotic stabilization, we need to compute the defect of the univariate $\Tor$ functor.

\begin{lemma}\label{L:w-tor}
 Suppose $B \in \modL$ is $FP_{2}$. Then $w(\Tor_{1}(\blank, B)) \simeq
 \Ext^{1}(B, \Lambda)$.
\end{lemma}

\begin{proof}
 Since $B$ is $FP_{2}$, we can find a syzygy sequence $0 \to \Omega B \to Q \to B \to 0$ all of whose terms are finitely presented. Dualizing it into $\Lambda$ we have an exact sequence 
\[
0 \to B^{\ast} \to Q^{\ast} \to (\Omega B)^{\ast} \to \Ext^{1}(B, \Lambda) \to 0.
\]
On the other hand, the same syzygy sequence gives rise to an exact sequence of functors
 \[
 0 \lra \Tor_{1}(\blank, \Omega B) \lra \blank \otimes \Omega B \lra \blank \otimes Q 
 \lra \blank \otimes B \lra 0.
 \]
 Since $w$ is a biadjoint~\cite[The diagram after Theorem 4.2]{MR-2}, it is exact and, applying it to the sequence above, we have an exact sequence 
 \[
 0 \to B^{\ast} \to Q^{\ast} \to (\Omega B)^{\ast} \to w \big(\Tor_{1}(\blank, \Omega B)\big) \to 0.
 \]
 The result now follows.
\end{proof}

We are now ready to describe the defect of the asymptotic stabilization.

\begin{theorem}
Suppose that a left $\Lambda$-module $B$ is $FP_{\infty}$ and that it has an injective resolution all of whose terms are also $FP_{\infty}$. Then 
\[
w \big(\T_{0}(\blank, B)\big) \simeq \varinjlim  \Ext^{1} (\Omega^{i}\Sigma^{i+1}B, \Lambda)
\] 
where the limit is taken over the sequence of iterations of the map 
from~\eqref{Eq:double-decker}. In particular, this formula applies to an arbitrary finitely generated module over an arbitrary artin algebra.
\end{theorem}

\begin{proof}
  
\begin{align*}
 w \big(\T_{0}(\blank, B)\big)
 	& \simeq w\big(\varprojlim \Tor_{1} (\Omega^{i}\blank, \Sigma^{i+1}B)\big)
	& \text{(by definition)}
\\
	& \simeq w\big(\varprojlim \Tor_{1} (\blank, \Omega^{i}\Sigma^{i+1}B)\big)
	& \text{(by Lemma~\ref{L:tor-1})}
\\
	& \simeq \varinjlim w\big( \Tor_{1} (\blank, \Omega^{i}\Sigma^{i+1}B)\big) 
	& \text{(since $w$ converts limits to colimits)}
\\
	& \simeq \varinjlim  \Ext^{1} (\Omega^{i}\Sigma^{i+1}B, \Lambda)
	& \text{(by Lemma~\ref{L:w-tor})}
\end{align*}
\end{proof}

\begin{remark}\label{R:W}
 The defect in question can also be written as 
 \[
 w \big(\T_{0}(\blank, B)\big) \simeq \varinjlim  \Ext^{1} (\Sigma^{i+1}B, \Sigma^{i}\Lambda).
 \] 
 Replacing the syzygy endofunctor on the projectively stable category by the cosyzygy endofunctor on the injectively stable category, and universally inverting $\Sigma$ we have what we may call Buchweitz cohomology $W^{\bullet}$ based on injectives. (See~\cite{N-98} for more details on this construct.) Arguments similar to the ones preceding Theorem~\ref{T:second} show that the universal inversion of $\Sigma$ can be replaced by a stabilization of $\Ext^{1}$ (which replaces $\Hom$ modulo injectives). This leads to a surprising description of the defect:
 \[
 w \big(\T_{0}(\blank, B)\big) \simeq W^{0}(B, \Lambda) = \varinjlim\, (\overline{\Sigma^{i}B, \Sigma^{i}\Lambda}).
 \]
 The reader is invited to compare this formula with that for the defect of the tensor product~(\cite[Example 3.14]{MR-1}): $w(\blank \otimes B) \simeq (B, \Lambda) = B^{\ast}$.
\end{remark}

The just proved theorem immediately leads to a description of the right-derived functors of $\big(\T_{0}(\blank, B)\big)$. To see this, recall (\cite[top of page 210]{A66}) that, for a finitely presented functor $F$, the natural transformation $R^{0}F \to (w(F), \blank)$ is an isomorphism. Since, for each $n$, the natural transformation $R^{n}F \lra R^{n}R^{0}F$ is always an isomorphism, we have
\begin{corollary}\label{C:Rn}
 Under the assumptions of the theorem, the natural transformation 
 \[
 R^{n}\big(\T_{0}(\blank, B)\big) \lra \Ext^{n}\big(\varinjlim  \Ext^{1} \big(\Omega^{i}\Sigma^{i+1}B, \Lambda), \blank\big) \simeq \Ext^{n}\big(W^{0}(B, \Lambda), \blank \big)
 \]
 is an isomorphism for all $n$. \qed
\end{corollary}

In view of Remark~\ref{R:W}, we have

\begin{corollary}
 Under the assumption of the theorem, if $\Lambda$ is of finite injective dimension as a left module over itself, then all derived functors of $\T_{0}(\blank, B)$ are zero. \qed
\end{corollary}

Now recall the torsion functor $\injt =  \blank \ot \Lambda$ introduced in~\cite{MR-2}. The foregoing discussion motivates
\begin{definition}
 The  functor 
\[
\injt_{\infty} := \T_{0}(\blank, \Lambda) \simeq \underset{k \geq 0}{\varprojlim}\, (\Omega^{k} \blank\ \ot\ \Sigma^k \Lambda)
 \] 
is called the \texttt{asymptotic torsion} functor.\footnote{This is a functor on right modules. A similar definition applies to left modules.}
\end{definition}

For the next result, recall the cotorsion functor 
$\injc = (\overline{\Lambda, \blank})$ introduced in~\cite{MR-2} and defined on left modules. It is now natural to introduce 
\begin{definition}
 The functor 
\[
\injc^{\infty} := W^{0}(\Lambda, \blank) = \varinjlim\, (\overline{\Sigma^{i}\Lambda, \Sigma^{i}\blank})
\]
may therefore be called the \texttt{asymptotic cotorsion} functor.\footnote{A similar definition applies to right modules.} 
\end{definition}

\begin{theorem}
 Suppose that $\Lambda$, viewed as a left module over itself, has an injective resolution all of whose terms are $FP_{\infty}$. Then 
 \[
 w(\injt_{\infty}) \simeq \injc^{\infty}(\prescript{}{\Lambda}\Lambda),
 \]
 i.e., the defect of the asymptotic torsion on right modules is isomorphic to the asymptotic cotorsion of $\Lambda$ viewed as a left module over itself. \qed
\end{theorem}

\begin{remark}
 The reader should compare this formula with~\cite[Corollary~5.4]{MR-2} showing that 
 \[
 w(\injt) \simeq \injc(\prescript{}{\Lambda}\Lambda)
 \]
when the injective envelope of $\prescript{}{\Lambda}\Lambda$ is finitely presented.
\end{remark}

Specializing Corollary~\ref{C:Rn} to $B:= \prescript{}{\Lambda}\Lambda$, we have

\begin{corollary}
 Under the assumptions of the theorem, the canonical natural transformation 
 \[
 R^{n}\injt_{\infty} \lra \Ext^{n}_{\Lambda}\big(\injc^{\infty}(\prescript{}{\Lambda}\Lambda), \blank\big)
 \]
 is an isomorphism for all $n$.
\end{corollary}

\end{document}